\documentclass[11pt, twoside]{article}

\usepackage{amsmath, amssymb, amsthm}
\usepackage{enumitem}
\usepackage[margin=2.5cm]{geometry}
\usepackage{setspace}

\usepackage{dsfont}
\usepackage{relsize}

\usepackage{verbatim}
\usepackage{cases}

\usepackage{graphicx,float,subcaption}
\usepackage{algorithm}
\usepackage[noend]{algpseudocode}

\newcounter{criteria}

\makeatletter
\newenvironment{algcriteria}[1][htb]{\renewcommand{\ALG@name}{Criterion}\refstepcounter{criteria}\begin{algorithm}[#1]}{\end{algorithm}}
\makeatother

\usepackage{hyperref}
\usepackage[svgnames]{xcolor}

\usepackage[numbers,sort,compress]{natbib}

\newtheorem{theorem}{Theorem}
\newtheorem{corollary}[theorem]{Corollary}
\newtheorem{proposition}[theorem]{Proposition}
\newtheorem{lemma}[theorem]{Lemma}

\theoremstyle{definition}
\newtheorem{assumption}{Assumption}
\newtheorem{definition}{Definition}
\newtheorem{remark}{Remark}

\newcommand{\RR}{\mathbb{R}}

\newcommand{\EE}{\mathbb{E}}

\newcommand{\norm}[1]{\left\lVert #1 \right\rVert}
\newcommand{\inner}[2]{\left\langle #1, #2 \right\rangle}

\newcommand{\tbo}[2]{\begin{bmatrix}#1 \\ #2\end{bmatrix}}

\newcommand{\argmin}{\mathop{\mathrm{argmin}}}
\newcommand{\argmax}{\mathop{\mathrm{argmax}}}

\newcommand{\tr}{\mathrm{tr}}

\newcommand{\opnorm}[1]{\norm{#1}_{\mathrm{op}}}
\newcommand{\hinfnorm}[1]{\norm{#1}_{\mathcal{H}_\infty}}

\title{\textbf{Online Learning for Supervisory Switching Control}}
\author{Haoyuan Sun, Ali Jadbabaie\\ \\Massachusetts Institute of Technology}
\date{}

\begin{document}

\maketitle

\begin{abstract}
    We study supervisory switching control for partially-observed linear dynamical systems.
    The objective is to identify and deploy a suitable controller for the unknown system by periodically selecting among a collection of $N$ candidate controllers, some of which may destabilize the underlying system.
    While classical estimator-based supervisory control guarantees asymptotic stability, it lacks quantitative finite-time performance bounds.
    Conversely, current non-asymptotic methods in both online learning and system identification require restrictive assumptions that are incompatible in a control setting, such as system stability, which preclude testing potentially unstable controllers.
To bridge this gap, we propose a novel, non-asymptotic analysis of supervisory control that adapts multi-armed bandit algorithms to a control-theoretic setting.
    The proposed data-driven algorithm evaluates candidate controllers via scoring criteria that leverage system observability to isolate the effects of state history, enabling both detection of destabilizing controllers and accurate system identification.
    We present two algorithmic variants with dimension-free, finite-time guarantees, where each identifies the matching controller in $O(N \log^2 N)$ steps, while simultaneously achieving finite $L_2$-gain with respect to system disturbances.

\end{abstract}

\renewcommand{\thefootnote}{\roman{footnote}}
\footnotetext[0]{This work was supported by ONR grant \#N00014-23-1-2299 and AFOSR MURI grant \#FA9550-25-1-0375.}
\footnotetext[0]{Corresponding author: Haoyuan Sun (\texttt{haoyuans [at] mit [dot] edu}).}
\renewcommand{\thefootnote}{\arabic{footnote}}

\section{Introduction}
This paper presents a supervisory switching framework that guarantees finite-time adaptation for unknown linear systems, leveraging online algorithm to establish non-asymptotic analysis for this classical problem.
In many applications involving complex systems, such as power systems~\citep{meng2016microgrid}, autonomous vehicles~\citep{aguiar2007trajectory}, and epidemic control~\citep{bin2021hysteresis}, it is impractical for a single controller design to achieve acceptable performance across all operating conditions of the unknown plant.
To address this challenge, \textit{switching control} employs a collection of candidate controllers alongside a switching policy that selects the controller best suited to the current system behavior~\cite{liberzon2003switching}.
This approach has important applications in many established control paradigms, including gain scheduling~\citep{shamma2002gain, rugh2000research} and linear parameter-varying control~\citep{wu1995control,mohammadpour2012control}.

Within this framework, \textit{supervisory switching control} relies on a higher-level policy that monitors system measurements to determine which controller should be inserted into the feedback loop.
One well-studied class of supervisory policies is the \textit{estimator-based supervision}~\citep{morse1996supervisory,morse1997supervisory,hespanha2001tutorial}.
This approach employs a predetermined collection of models that are designed to cover the uncertainties of the unknown plant.
In practice, a finite covering is often used to balance between approximation accuracy and computational~\cite{morse1996supervisory,anderson2000multiple}.
Each model of the collection is associated with a candidate controller providing satisfactory performance for this model.
Then, a multi-estimator is constructed by simulating the predicted output of each model and computing its deviation from the true observed process $y_t$.
A switching signal $p_t$ is then determined by periodically selecting the model most consistent with the observations.
Provided the switches are separated by a \textit{dwell time} sufficiently long to avoid chattering, the certainty equivalence principle ensures the closed-loop system resulted from this policy is asymptotically stable.
However, this traditional estimator-based approach does not quantitatively specify the time required to find a suitable controller and deriving precise finite-time bounds remains an open challenge.

To develop a non-asymptotic analysis of supervisory switching control, we revisit this classical problem through the lens of modern learning techniques.
A natural starting point is the non-asymptotic analysis of system identification, which estimates the parameters of an unknown dynamical system from a single trajectory of input-output data.
Conceptually, estimator-based supervisors select the model that minimizes the error between predicted and observed outputs, akin to the least-squares objectives underpinning system identification.
Recent advances in non-asymptotic system identification have extended classical asymptotic results~\citep{ljung1998system} to quantify the sample complexity of learning accurate system estimates from data~\citep{simchowitz2018learning,ziemann2023tutorial,li2024learning}.
This has led to numerous methods for the non-asymptotic identification of partially observed linear systems, typically by estimating the system's response matrix to an exploratory signal and reconstructing the system parameters via the Ho-Kalman algorithm~\citep{oymak2019non,simchowitz2019learning,sarkar2021finite,bakshi2023new}.
However, these methods cannot be directly applied to our supervisory switching control paradigm because they presuppose system stability.
This assumption is frequently violated in switching systems, where not all candidate controllers stabilize the underlying plant.
Therefore, standard system identification approach cannot ensure safe exploration, as the potentially unstable controllers may cause the system to diverge before sufficient data is collected.

Several works have attempted to address this limitation of performing system identification under instability.
For example, one can explicitly identify system dynamics by injecting large probing signals, and subsequently control the learned system with online convex optimization~\cite{chen2021black}.
A different approach pairs a multi-armed bandit algorithm with a stability certificate to safely switch between candidate controllers for unknown nonlinear systems~\cite{li2023online,kim2025online}.
Crucially, however, these methods apply only to fully-observed systems, requiring direct knowledge of the system states to guarantee both performance and stability.
In contrast, our prior work~\citep{sun2024least} addresses the partially-observed setting by employing a two-step procedure: first detecting and eliminating destabilizing controllers, and then applying least-squares system identification to the remaining stable candidates.
While this approach ensures safety under partial observability, it relies on a conservative instability detection criterion and exhaustively searches through controllers in a fixed order.
This highly inefficient policy may commit to an unstable controller for up to $O(N^2)$ consecutive steps.

\paragraph{Contributions.}
In this work, we propose a data-driven analysis of supervisory switching control that establishes non-asymptotic guarantees for adapting to partially-observed linear systems with a collection of $N$ candidate controllers. 
Drawing on techniques from the machine learning and multi-armed bandit literature, our approach (Algorithm~\ref{alg:meta}) evaluates each candidate controller over fixed switching intervals.
Specifically, our evaluation criteria leverage system observability to approximate state trajectories from output data, effectively isolating the influences of earlier destabilizing controls.
This eliminates the need for the stability or fully-observed state assumptions mandated by previous works.

To achieve this, we introduce two evaluation criteria to simultaneously detect destabilizing controller (Criterion~\ref{alg:instability}) and identify the unknown system (Criterion~\ref{alg:sys-id}).
The fixed interval between switches is deliberately chosen to ensure sufficient measurements for both criteria, thereby providing a quantitative lower bound on the \textit{dwell time} that was only qualitatively defined in classical estimator-based supervision.

Finally, based on these evaluation metrics, our algorithm selects the candidate controller that balances exploration and exploitation.
Unlike standard multi-armed bandit frameworks, which assume independent and stationary rewards, our algorithm explicitly accounts for the evolution of the system states, striking a balance between learning performance and system stability.
We present two algorithmic variants that find the matching controller in $O(N \log^2 N)$ steps with respect to the number of candidate controllers $N$ and at the same time guarantee finite $L_2$ gain with respect to disturbances (Theorem~\ref{thm:ucb-unified}).
These guarantees represent a significant improvement over our prior work~\cite{sun2024least}, which required $O(N^2)$ steps for identification.

\section{Problem Settings}

We consider a collection of discrete-time, partially-observed linear systems $\left\{(C_i, A_i, B_i)\right\}_{i=1}^N$.
The unknown true system parameters $(C_\star, A_\star, B_\star)$ belongs to this collection, and we denote their index as $i_\star$. 
Each of these systems evolves according to the following discrete-time dynamics:
\begin{align*}
    x_{t+1} &= A_i x_t + B_i \bar{u}_t + w_t, \\
    y_t &= C_i x_t + \eta_t,
\end{align*}
where the dimensions are $x_t \in \RR^{d_x}, \bar{u}_t \in \RR^{d_u}$ and $y_t \in \RR^{d_y}$.
We assume zero initialization $x_0 = 0$ and independent Gaussian noise for both the process disturbance $w_t \sim \mathcal{N}(0, \sigma_w^2 I_{d_x \times d_x})$, and observation noise $\eta_t \sim \mathcal{N}(0, \sigma_\eta^2 I_{d_y \times d_y})$.

Each candidate model is paired with a stabilizing candidate linear controller. We assume that while a model's associated controller provides satisfactory performance for that specific model, applying a controller mismatched to the true plant may result in an unstable closed-loop system.

To facilitate switching between candidate controllers, we adopt the \textit{multi-controller} architecture $K(p_t; \check{x}_t, y_t)$ in \citep{hespanha2001tutorial}, where $\check{x}_t$ is the internal state of the controller, $p_t \in [N]$ is a piece-wise constant switching signal indicating the active controller, and $y_t$ is the system's output.
And to ensure persistency of excitation, we inject an independent, additive Gaussian signal $u_t \sim \mathcal{N}(0, \sigma_u^2 I_{d_u \times d_u})$, so that the control signal into the open-loop system is $\bar{u}_t = u_t + K(p_t, \check{x}_t, y_t)$.

For every pair $1 \le i, j \le N$, we denote $M_{[i, j]} = \left(C_{[i, j]}, A_{[i, j]}, B_{[i, j]}\right)$ as the closed-loop system resulting from applying the $j$th candidate controller to the $i$th open-loop model.
As illustrated by Figure~\ref{fig:switching-loop}, applying a constant switching signal $p_t = j$ on the switching system results in a closed-loop modeled by $M_{[i_\star, j]}$.
Furthermore, we can pre-compute every possible closed-loop system $M_{[i,j]}$ from the collection of candidate models and controllers.
Consequently, under a piecewise-constant switching signal, our observations are generated from one of these $N^2$ possible processes.

\begin{figure}[!htb]
    \centering
    \includegraphics{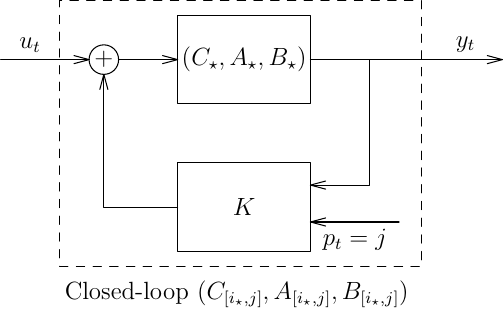}
    \caption{Illustration of a linear switching system (with a fixed switching signal)}
    \label{fig:switching-loop}
\end{figure}

To quantitatively analyze the finite-time performance of this switching system, we assume that applying the matched controller yields an observable system, and that any destabilizing controller can be readily detected:

\begin{assumption}
\label{assum:observability}
    For every $1 \le i \le N$, the correctly matched closed-loop system $M_{[i,i]}$ is $\varepsilon_c$-strictly observable with index $\nu$.
    That is, there exists a constant $\varepsilon_c > 0$ so that
    \[ \sigma_{\min}\left(\left[C_{[i,i]}A_{[i,i]}^{\nu-1}; \dots; C_{[i,i]}A_{[i,i]}; C_{[i,i]}\right]\right) \ge \varepsilon_c \quad \forall \, 1 \le i \le N.\]
\end{assumption}

\begin{assumption}
\label{assum:explosive}
    If a closed-loop system $M_{[i,j]}$ is unstable, there exists a constant $\varepsilon_a > 0$ so that its spectral radius satisfies $\rho(A_{[i,j]}) \ge 1 + \varepsilon_a$.
\end{assumption}

Following the conventions in the learning-for-control literature (e.g. \cite{chen2021black, sarkar2021finite}), our assumptions are non-asymptotic counterparts to the standard control definitions. We explicitly declare the margin bounds $\varepsilon$'s because they are part of our finite-time sample complexity guarantees.
Furthermore, we assume the candidate controllers are sufficiently distinct such that the $N^2$ possible closed-loop systems exhibit different dynamics.

\begin{assumption}
\label{assum:markov-sep}
    Given a system $(C, B, A)$, define its Markov parameter $G$ with horizon $h$ as
    \[G := \left[C A^{h-1} B, \dots, C A B, C B\right].\]
    Then, for a fixed horizon $h$, the Markov parameters $G_{[i,j]}$ corresponding to the $N^2$ possible closed-loop systems are sufficiently different.
    Specifically, there exists a constant $\gamma > 0$ such that for any values of $j$ and $i \neq i'$, we have $\opnorm{G_{[i,j]} - G_{[i',j]}} \ge \gamma$.
\end{assumption}

Intuitively, smaller margins $\varepsilon_a, \varepsilon_c, \gamma$ increase the difficulty of learning the unknown system, requiring longer episode length $\tau$ to reliably evaluate the performance of the candidate controllers.
As shown in Propositions~\ref{thm:instability} and~\ref{thm:sys-id}, our non-asymptotic guarantees scale with $1/\varepsilon_a$, $1/\varepsilon_c$, and $1/\gamma^2$.

\paragraph{Notations.}
For a matrix $M$, we denote $\norm{M}_F$ as its Frobenius norm, $\opnorm{M} = \sigma_{\max}(M)$ as its operator norm, $\rho(M)$ as its spectral radius, and $\tr(M)$ as its trace.
For a stable linear system $(C, A, B)$, we define its $\mathcal{H}$-infinity norm as $\hinfnorm{C, A, B} = \sup_{\norm{s}=1} \sigma_{\max}(C(s I - A)^{-1}B)$, and for simplicity, we use shorthand $\hinfnorm{C, A} = \hinfnorm{C, A, I}$ and $\hinfnorm{A} = \hinfnorm{I, A, I}$.

To streamline our exposition, we occasionally omit constant factors that do not meaningfully impact our conclusions.
We use big-$O$ notation $f \in O(g)$ if $\limsup_{x \to \infty} f(x) / g(x)$ is finite.
Also, we write $f \lesssim g$ if $f \le c \cdot g$ for some universal constant $c$, and $f \asymp g$ if both $f \lesssim g$ and $g \lesssim f$ hold.
Unless stated otherwise, all dimension-dependent terms are written explicitly.
In particular, we consider the Frobenius norm and the trace to be dimension-dependent, whereas the operator norm is dimension-free.

Finally, to analyze stochastic disturbances, we consider a generalization of Gaussian random variables.
Intuitively, a \textit{sub-Gaussian random variable} has tails that is dominated by a Gaussian distribution.
\begin{definition}
    We say that a zero-mean random vector $X \in \RR^d$ is $\sigma^2$-\textit{sub-Gaussian} if for every unit vector $v$ and real value $\lambda$, it holds that
    \[ \EE \exp(\lambda \inner{v}{X}) \le \exp(\sigma^2\lambda^2 / 2).\]
\end{definition} 
\section{Algorithm Design and Evaluation Criteria}
In this section, we introduce our online algorithm for supervisory switching control and establish its finite-time performance guarantees.
The algorithm proceeds in episodes of fixed length $\tau$, where $\tau$ is a constant specified in Propositions~\ref{thm:instability} and~\ref{thm:sys-id} to ensure sufficient measurements are collected for each switching decision.
During each episode, a candidate controller is applied to the plant, and we observe the resulting output trajectory.
Upon completion of the episode, we use the outputs of this episode to evaluate the active controller using a scoring criterion $S = \lfloor \frac{1}{2}(S^{(1)} + S^{(2)}) \rfloor$, where $S^{(1)}$ (Criteria~\ref{alg:instability}) determines whether the controller stabilizes the unknown plant, and $S^{(2)}$ (Criteria~\ref{alg:sys-id}) utilizes least-squares system identification to assess consistency between the controller's associated model and the observed data.
We then select the controller for the next episode by maximizing the average score plus an exploration factor governed by the parameter $a_\ell$. 
After a predetermined number of episodes $L$, we terminate exploration and commit to the most frequently selected controller. This procedure is formalized in Algorithm~\ref{alg:meta}.

\begin{algorithm}
\begin{algorithmic}[1]
    \State Input: collection of candidate models $\{(C_i, A_i, B_i)\}_{i=1}^N$ and their associated controllers.
\State Let $\bar{s}_i(q) = \frac{1}{q} \sum_{r=1}^q s_i[r]$ denote the average score.
    \State Let $Q_i(\ell)$ be the number of episodes where controller $i$ is applied through episode $\ell$.
    \For{episodes $\ell = 1, 2, \dots, L$}
        \If{$\ell \le N$} 
            Select controller $i_\ell = \ell$.
        \Else
            \State Select controller $i_\ell = \argmax_i \bar{s}_i(q_{i, \ell}) + \sqrt{\frac{a_\ell}{q_{i, \ell}}}$, where $q_{i, \ell} = Q_i(\ell-1)$.
        \EndIf
\State Roll out a trajectory of length $\tau$ steps.
        \State Observe outputs $\vec{y}_\ell := (y_{\tau(\ell-1)+1}, \dots, y_{\tau \ell})$.
        \State Compute score $s_{i_\ell}[Q_{i_\ell}(\ell)] = S(\vec{y}_\ell; i_\ell)$.
        \Comment{$S(\cdot; \cdot)$ is determined by Criteria~\ref{alg:instability} and~\ref{alg:sys-id}.}
    \EndFor
    \State Commit to controller $\hat{i} = \argmax_i Q_i(L)$.
\end{algorithmic}
\caption{Algorithm for supervisory switching control}
\label{alg:meta}
\end{algorithm}

We emphasize that this setting introduces fundamental challenges distinct from standard online learning frameworks:
\begin{itemize}[wide, labelindent=3pt]
    \item Standard multi-armed bandit problems assume independent and identically distributed observations, where actions in one round do not affect observations in future rounds.
    In contrast, in our setting, the system state at the end of one episode serves as the initial condition for the next, creating intricate dependencies across switching decisions.
    \item Theoretical analysis of online learning with states --- such as reinforcement learning (RL) --- typically rely on one of two assumptions to bound state trajectories during exploration.
    Episodic RL~\cite{jin2018q} assumes the system is periodically reset to a default initial state distribution, preventing compounding errors across episodes.
    Conversely, non-episodic online RL analyzes continuous operation without resets~\cite{simchowitz2020naive,muehlebach2026the,tian2023can}, but relies on restrictive structural assumptions, such as open-loop system stability, to prevent the state from diverging.
    In contrast, our setting requires exploring potentially destabilizing controllers without resets.
    This means that the systems states may diverge, thus violating the core assumptions of standard RL analysis.
\end{itemize}

To overcome these challenges, our evaluation criteria leverage system observability to isolate the effect of the initial states for each episode.
Specifically, we can use observability to approximate the episode's initial state from the output trajectory and use this estimate to subtract the transient response from the data.
This allows us to evaluate each controller's intrinsic performance without the influence of the state history from previous episodes.

\subsection{Criterion 1: Instability Detection}
\label{sec:instability}
The core objective of this criterion is to construct a statistic that remains bounded when the active controller and its associated model match the true system, but grows unbounded when the active controller is destabilizing.
To achieve this, we leverage the system's observability to reconstruct an estimate of the episode's initial state from output measurements alone.

For convenience, denote the current episode's outputs as $y_1, \dots, y_\tau$ and let $j$ index the active controller.
We write out the following input-output relation for the first $\nu$ steps:
\begin{equation}
\label{equ:obs-dynamics}
\begin{bmatrix} y_\nu \\ \vdots \\y_2 \\ y_1 \end{bmatrix}
=
\mathcal{O}_{[i_\star, j]} x_1
+
\tbo{\mathcal{T}^{(\nu-1)}_{[i_\star, j]}}{\mathbf{0}_{1 \times (\nu-1)}}
\begin{bmatrix} w_{\nu-1} \\ \vdots \\w_2 \\ w_1  \end{bmatrix}
+
\tbo{\mathcal{T}^{(\nu-1)}_{[i_\star, j]}}{\mathbf{0}_{1 \times (\nu-1)}}
\begin{bmatrix} B_{[i_\star, j]}u_{\nu-1} \\ \vdots \\ B_{[i_\star, j]}u_2 \\ B_{[i_\star, j]}u_1  \end{bmatrix}
+
\begin{bmatrix} \eta_\nu \\ \vdots \\ \eta_2 \\ \eta_1  \end{bmatrix},
\end{equation}
where we denote $\mathcal{O} = [CA^{\nu-1}; \dots; CA; C]$ as the observability matrix and
\[\mathcal{T}^{(k)} = \begin{bmatrix} 
    C & CA & CA^2 & \dots & CA^{k-1} \\
    0 & C & CA & \dots & CA^{k-2} \\
    0 & 0 & C & \dots & CA^{k-3}\\
    \vdots & \vdots & \vdots & \ddots & \vdots \\
    0 & 0 & 0 & \dots & C
\end{bmatrix}\]
as a $k \times k$ Toeplitz matrix.
Multiplying both sides of \eqref{equ:obs-dynamics} by the pseudo-inverse $\mathcal{O}_{[i_\star, j]}^\dagger$ yields:\footnote{This pseudo-inverse can be computed efficiently via Arnoldi iteration without explicitly forming the observability matrix.}
\begin{equation}
\label{equ:obs-dynamics-short}
    \mathcal{O}_{[i_\star, j]}^\dagger [y_\nu, \dots, y_1]^\top = x_1 + \mathcal{O}_{[i_\star, j]}^\dagger \xi,
\end{equation}
where $\xi$ aggregates all of the exploratory signal and disturbances.

We first consider the case where the active controller matches the true system ($i_\star = j$).
In this case, the closed-loop dynamics are stable, and the Toeplitz matrices are bounded in norm by $\hinfnorm{C_{[j, j]}, A_{[j, j]}, B_{[j, j]}}$ (see~\cite{tilli1998singular}).
Under Assumption~\ref{assum:observability}, the closed-loop system is $\nu$-strictly observable, which means $\opnorm{\mathcal{O}_{[j, j]}^\dagger} \le 1/\varepsilon_c$.
Consequently, the aggregated disturbance term $\mathcal{O}_{[j, j]}^\dagger \xi$ is sub-Gaussian.
By standard concentration inequalities for sub-Gaussian random variables, the estimate 
\[ \hat{x}_1 := \mathcal{O}_{[j, j]}^\dagger[y_\nu, \dots, y_1]^\top\] 
is a close approximation to the true initial state of the episode $x_1$ with high probability.
Then, simulating a predicted trajectory $\hat{y}_1, \dots \hat{y}_\tau$ starting from $\hat{x}_1$ under the candidate model yields a residual $y_t - \hat{y}_t$ that remains below a threshold $\Theta$ with high probability.

Next, when the active controller is destabilizing, Assumption~\ref{assum:explosive} ensures the closed-loop system possesses an explosive mode in the direction of the leading eigenvector of $A_{[i_\star, j]}$.
Since the system is persistently excited by Gaussian process noise, this explosive mode is activated with non-zero probability.
In this regime, the residual $y_t - \hat{y}_t$ can be decomposed as the sum of estimation error plus accumulated disturbances, where the first part arises because $\hat{x}_1$ does not approximate the true initial state $x_1$ when $i_\star \neq j$.
Critically, the estimation error is determined entirely by the first $\nu$ observations, while the system disturbances are accumulated at every step of the trajectory.
As a result, the accumulated disturbance contains a component driven by process noise realized after the $\nu$-th step, which is independent of the estimation error.
Hence, with non-zero probability, these independent components will not cancel, and contributions from the explosive mode will dominate the residual for sufficiently large $\tau$.
Thus, the residual $y_\tau - \hat{y}_\tau$ will often exceed the threshold $\Theta$ when the active controller is destabilizing.

We formalize this intuition in the following instability criterion, which outputs 1 when it classifies the current closed-loop system as stable and 0 otherwise.
\begin{algcriteria}
\begin{algorithmic}[1]
    \State Input: an observed trajectory $y_1, \dots, y_\tau$ of length $\tau$, active controller $j$.
    \State Let $\mathcal{O}_{[j,j]}$ be the observability matrix of the model $(C_{[j,j]}, A_{[j,j]}, B_{[j,j]})$, compute
    \[\hat{x}_1 = \mathcal{O}_{[j,j]}^\dagger [y_\nu, \dots, y_1]^\top.\]
    \State Compute a predicted trajectory
    \[\hat{y}_{t} = C_{[j,j]} A_{[j,j]}^{t-1} \hat{x}_1.\]
\If{$\norm{y_{\tau} - \hat{y}_{\tau}} > \sqrt{2d_y \Theta\log(2d_y/\delta_1})$} \Return 0
\Else {} \Return 1
    \EndIf
\end{algorithmic}
\caption{Instability detection criterion $S^{(1)}$}
\label{alg:instability}
\end{algcriteria}

We state the theoretical guarantees for this criterion below, with the full proof deferred to Appendix~\ref{sec:proof-instability}.

\begin{proposition}
\label{thm:instability}
    Given a fixed threshold $\Theta$ depending only on $\varepsilon_c, \sigma_u, \sigma_w, \sigma_\eta$ and $\max_i \hinfnorm{C_{[i, i]}, A_{[i, i]}, B_{[i, i]}}$,
    and a sufficiently long trajectory length
    \[\tau \ge T_1 \asymp \nu + \log(1+\varepsilon_a)^{-1} \log\left(\frac{d_y \Theta \log(2d_y/\delta_1)}{\varepsilon_a \varepsilon_c \sigma_w}\right),\]
    the instability criterion $S^{(1)}(y_1, \dots, y_\tau; j)$ satisfies:
    \begin{itemize}[wide, labelindent=3pt]
    \item If the $j$-th controller destabilizes the underlying plant, $S^{(1)} = 0$ with probability $\ge 2/5$.\footnote{We use $2/5$ here to streamline the exposition. A higher probability of detection (up to $1/2$) can be achieved with a longer trajectory length $T_1$.}
    \item If the active controller matches the true system ($i_\star = j$), $S^{(1)} = 1$ with probability $\ge 1 - \delta_1$.
    \end{itemize}
\end{proposition}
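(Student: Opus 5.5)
I treat the two bullets separately, since they rest on different mechanisms: concentration under the matched model, and anti-concentration under an explosive mode. In both cases I write the residual at time $\tau$ as a linear functional of the episode's randomness plus the initial state $x_1$. Put $\Pi := \mathcal{O}_{[j,j]}^\dagger$. The prediction is $\hat{y}_\tau = C_{[j,j]}A_{[j,j]}^{\tau-1}\Pi[y_\nu,\dots,y_1]^\top$, and the true output is $y_\tau = C_{[i_\star,j]}A_{[i_\star,j]}^{\tau-1}x_1 + \sum_{s<\tau} C_{[i_\star,j]}A_{[i_\star,j]}^{\tau-1-s}(w_s + B_{[i_\star,j]}u_s) + \eta_\tau$.

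\emph{Matched case ($i_\star=j$).} By \eqref{equ:obs-dynamics-short}, $\hat{x}_1 = x_1 + \Pi\xi$. Hence
\[
y_\tau-\hat{y}_\tau = -C_{[j,j]}A_{[j,j]}^{\tau-1}\Pi\xi + \sum_{s<\tau} C_{[j,j]}A_{[j,j]}^{\tau-1-s}(w_s+B_{[j,j]}u_s) + \eta_\tau ,
\]
so $x_1$ cancels exactly. This is the key point: the argument never needs a bound on the possibly huge state inherited from earlier episodes. The residual is a linear map of independent Gaussians, hence Gaussian (in particular sub-Gaussian). I bound its variance proxy by a constant $\Theta$ as follows. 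The matrices $\mathcal{T}^{(\nu-1)}$ have operator norm at most $\hinfnorm{C_{[j,j]},A_{[j,j]},B_{[j,j]}}$ (or its $B=I$ analogue), by the Toeplitz fact cited in the text. We have $\opnorm{\Pi}\le 1/\varepsilon_c$ by Assumption~\ref{assum:observability}. The convolution sum has energy bounded by the squared $\mathcal{H}_\infty$ norm times $\sigma_w^2+\sigma_u^2$. Finally, $\opnorm{C_{[j,j]}A_{[j,j]}^{\tau-1}}$ is also at most the $\mathcal{H}_\infty$ norm, viewing it as a Markov parameter. This gives $\Theta \lesssim H^2(\sigma_w^2+\sigma_u^2+\sigma_\eta^2)(1+H^2/\varepsilon_c^2)$, with $H=\max_i\hinfnorm{\cdot}$. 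A coordinatewise sub-Gaussian tail bound with a union bound over the $d_y$ coordinates then gives $\norm{y_\tau-\hat{y}_\tau}\le\sqrt{2d_y\Theta\log(2d_y/\delta_1)}$ with probability $1-\delta_1$, so $S^{(1)}=1$.

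\emph{Destabilizing case.} I condition on $\mathcal{F}$, the $\sigma$-field generated by all randomness up to step $\nu$. This includes $x_1$, the first $\nu$ outputs, and hence $\hat{y}_\tau$. Write $A=A_{[i_\star,j]}$ and $C=C_{[i_\star,j]}$. The residual is then $m + Z$, where $m$ is $\mathcal{F}$-measurable and
\[
Z=\sum_{s=\nu}^{\tau-1} CA^{\tau-1-s}(w_s+Bu_s)+\eta_\tau
\]
is a Gaussian vector independent of $\mathcal{F}$. For a centered Gaussian $Z$ and any fixed shift $m$, Anderson's inequality gives $P(\norm{m+Z}>r)\ge P(\norm{Z}>r)$. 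So it suffices to show $P(\norm{Z}>r)\ge 2/5$ for $r=\sqrt{2d_y\Theta\log(2d_y/\delta_1)}$.

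Now $\mathrm{Cov}(Z)\succeq \sigma_w^2\, CA^{k}(CA^{k})^\top$ with $k=\tau-1-\nu$. Take a unit left eigenvector $v$ of $A$ with $|\lambda|\ge1+\varepsilon_a$ (Assumption~\ref{assum:explosive}), and pick a direction $e$ so that $CA^k$ grows like $|\lambda|^k$ along it. I expect the main obstacle to be justifying this growth quantitatively: I need a lower bound on $\opnorm{CA^k}$ of the form $\gtrsim \varepsilon_c\varepsilon_a|\lambda|^{k}$. The explosive mode must be visible at the output, which is the source of the $\varepsilon_a\varepsilon_c$ factor in $T_1$. I would obtain it from the observability of the closed loop over a window of $\nu$ steps: the stacked matrices $CA^{k},\dots,CA^{k-\nu+1}$ applied to a right eigenvector have norm at least $\varepsilon_c|\lambda|^{k-\nu+1}$, up to a constant, which absorbs $\nu$ into $T_1$. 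Granting this, one coordinate of $Z$ in that direction is a scalar Gaussian with standard deviation $s\gtrsim\sigma_w\varepsilon_c\varepsilon_a(1+\varepsilon_a)^{\tau-\nu}$.

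A scalar Gaussian satisfies $P(|g|>r)\ge 2/5$ whenever $r\le 0.52\,s$. So it suffices that $(1+\varepsilon_a)^{\tau-\nu}\gtrsim r/(\sigma_w\varepsilon_a\varepsilon_c)$. Taking logarithms yields exactly $\tau\ge T_1$, up to constants and after absorbing $\sqrt{\cdot}$ versus $\log$. Since this bound holds conditionally on every $\mathcal{F}$, averaging gives $P(S^{(1)}=0)\ge2/5$ unconditionally.
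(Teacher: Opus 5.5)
Your proof is correct at the paper's level of rigor. The matched case is the same as the paper's: $x_1$ cancels, the error is controlled by $\opnorm{\mathcal{O}_{[j,j]}^\dagger}\le 1/\varepsilon_c$ and the $\mathcal{H}_\infty$ bound on the Toeplitz blocks, and a union bound over the $d_y$ coordinates gives the threshold.

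The destabilizing case differs in how you neutralize the unknown, possibly huge, offset.
\begin{itemize}
\item The paper singles out one noise vector and writes $y_\tau-\hat{y}_\tau = C_{[i_\star,j]}A_{[i_\star,j]}^{\tau-\nu-2}w_{\nu+1}+\xi$, with $\xi$ independent of $w_{\nu+1}$. It then argues by sign symmetry. Along the top singular direction the two terms agree in sign with probability $1/2$, and the Gaussian projection exceeds $\sigma_w/4$ with probability above $4/5$, giving $2/5$.
\item You condition on the first $\nu$ steps and treat the $\mathcal{F}$-measurable part as a fixed shift $m$. You then apply Anderson's inequality to the centered Gaussian $Z$ of all later noise, so the shift can only increase the exceedance probability.
\end{itemize}
Your route has no factor-$1/2$ loss: by lengthening $\tau$ your detection probability tends to $1$. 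The paper's sign argument is intrinsically capped at $1/2$, which is why its footnote says ``up to $1/2$''. The price is that Anderson's inequality needs the noise to be symmetric and unimodal, which holds here since everything is Gaussian. The paper's trick needs only symmetry of a single noise vector plus scalar anti-concentration.

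The growth bound on $\opnorm{C_{[i_\star,j]}A_{[i_\star,j]}^{k}}$ is obtained the same way in both arguments. To make your sketch precise, recover the single block from the stacked observability matrix via $\norm{\mathcal{O}_{[i_\star,j]}q}\le \norm{C_{[i_\star,j]}q}\,|\lambda|^{\nu}/\varepsilon_a$ for the explosive right eigenvector $q$. This geometric-sum step is exactly where $\varepsilon_a$ enters. It also makes your exponent $\tau-2\nu-O(1)$ rather than $\tau-\nu$, which the additive $\nu$ in $T_1$ absorbs.

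Two small remarks. First, like the paper, you use $\varepsilon_c$-strict observability of the \emph{mismatched} loop $M_{[i_\star,j]}$. Assumption~\ref{assum:observability} grants it only for matched pairs $M_{[i,i]}$, and without it the explosive mode could be invisible at the output. This is an implicit hypothesis shared by both proofs, not a defect specific to yours. Second, in your $\Theta$ the term $\eta_\tau$ contributes $\sigma_\eta^2$ with no $H^2$ factor. Up to constants the bound should read $(1+H^2/\varepsilon_c^2)(H^2\sigma_w^2+H^2\sigma_u^2+\sigma_\eta^2)$; your squared scaling in $H$ and $\varepsilon_c$ is otherwise the right one for a variance proxy.
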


\begin{remark}
We note that the episode length $T_1$ is fixed over time. In contrast, the episode length in~\cite{sun2024least} has to increase over time, leading to worse overall sample complexity.
\end{remark}

\begin{remark}
    A sharper, dimension-free guarantee for instability detection can be attained by employing a more sophisticated thresholding scheme.
    As this variant follows directly from the proof of Proposition~\ref{thm:instability}, we defer its discussion to Appendix~\ref{sec:proof-instability}.
\end{remark}

While this criterion may not successfully detect a destabilizing controller in a single episode, the matching controller will achieve a strictly higher expected score over repeated trials.
This separation in expected scores enables the algorithm to reject destabilizing candidates over multiple episodes.
We note, however, that this criterion is inconclusive for the remaining candidate controllers that do not match the true system but are stabilizing.
We shall address this scenario with our second criterion.

\subsection{Criterion 2: System Identification}
\label{sec:sys-id}
Having established a mechanism to detect destabilizing controllers, we now introduce a second criterion to distinguish the matching controller from other stabilizing candidates.
The objective is to construct a statistic that remains small when the active controller's associated model matches the true system, but is large when there is a mismatch.
Like the previous part, we denote the current episode's outputs as $y_1, \dots, y_\tau$ and let $j$ index the active controller.
We begin by recursively expand the system dynamics over a horizon of $h$ steps:
\begin{align*}
    y_t &= C_{[i_\star, j]} x_t + \eta_t \\
    &= C_{[i_\star, j]} (A_{[i_\star, j]} x_{t-1} + B_{[i_\star, j]} u_{t-1} + w_{t-1}) + \eta_t \\
    &= C_{[i_\star, j]} A_{[i_\star, j]}^{h} x_{t-h} \! + \! \underbrace{\sum_{s=1}^h C_{[i_\star, j]} A_{[i_\star, j]}^{s-1} B_{[i_\star, j]} u_{t-s}}_{G_{[i_\star, j]} z_t} \! + \! \sum_{s=1}^h C_{[i_\star, j]} A_{[i_\star, j]}^{s-1} w_{t-s} + \! \eta_t \\
    &:= G_{[i_\star, j]} z_t + e_t,
\end{align*}
where $z_t = [u_{t-h}; \dots; u_{t-1}]$ is a sliding window of exploratory input signals, $e_t$ accounts for both the initial condition and accumulated noise, and $G_{[i_\star, j]}$ denotes the Markov parameter that is equal to the system's output controllability matrix over horizon $h$:
\[G_{[i_\star, j]} := \left[C_{[i_\star, j]} A^{h-1}_{[i_\star, j]} B_{[i_\star, j]}, \dots, C_{[i_\star, j]} A_{[i_\star, j]} B_{[i_\star, j]}, C_{[i_\star, j]} B_{[i_\star, j]}\right].\]
We then construct a shifted output trajectory by subtracting both the predicted trajectory rolled out from the estimated initial condition and the candidate model's nominal response from the Markov parameters:
\begin{align*}
    \widetilde{y}_t 
    = y_t - C_{[j, j]} A_{[j, j]}^{t-1}\hat{x}_1 - G_{[j, j]} z_t
    = (G_{[i_\star, j]} - G_{[j, j]})z_t + (e_t - C_{[j, j]} A_{[j, j]}^{t-1}\hat{x}_1)
    := (G_{[i_\star, j]} - G_{[j, j]}) z_t + r_t,
\end{align*}
where $r_t$ aggregates the residual terms not modeled by $z_t$.
Because $r_t$ only depends on the initial state of the horizon and the disturbances, $r_t$ and $z_t$ are independent.

When the active controller matches the true system ($i_\star = j$), the parameter gap $\Delta := G_{[i_\star, j]} - G_{[j, j]}$ is identically zero. 
With the same stability arguments from Section~\ref{sec:instability}, the residual term $r_t$ is $\sigma_r^2$-sub-Gaussian, where we can show that $r_t \in O(1/\varepsilon_c)$.
Estimating $\Delta$ from the data points $\left\{(z_t, \widetilde{y}_t)\right\}_{t=\nu+h+1}^{\tau}$ via ordinary least-squares (OLS) yields:
\begin{align*}
    \widehat\Delta 
    := \argmin_\Delta \sum_{t=\nu+h+1}^{\tau} \norm{\widetilde{y}_t - \Delta z_t}^2
    = \left(\sum_{t=\nu+h+1}^{\tau} \widetilde{y}_t z_t^\top\right) \left(\sum_{t=\nu+h+1}^{\tau} z_t z_t^\top\right)^{-1}.
\end{align*}
We can further rewrite the OLS estimate as follows:
\begin{equation}
\label{equ:ols-expanded}
\begin{aligned}
    \widehat\Delta 
&= \left(\sum_{t=\nu+h+1}^{\tau} (\Delta z_t + r_t) z_t^\top\right) \left(\sum_{t=\nu+h+1}^{\tau} z_t z_t^\top\right)^{-1} \\
    &= \Delta + \left(\sum_{t=\nu+h+1}^{\tau} r_t z_t^\top\right) \left(\sum_{t=\nu+h+1}^{\tau} z_t z_t^\top\right)^{-1}.
\end{aligned}
\end{equation}
Because $\Delta = 0$ when $i_\star = j$, for sufficiently large $\tau$, the value of $\opnorm{\widehat\Delta}$ falls below a threshold $\gamma$ with high probability.

Next, when the active controller does not match the true system ($i_\star \neq j$), we seek to show that the OLS estimate \eqref{equ:ols-expanded} is large.
In this case, Assumption~\ref{assum:markov-sep} guarantees that $\opnorm{\Delta} \ge \gamma$.
Crucially, in the second term, $r_t$'s and $z_t$'s are independent.
Then, there is non-zero probability that the residuals do not cancel the true difference $\Delta$, yielding $\opnorm{\widehat\Delta} \ge \opnorm{\Delta} \ge \gamma$.

To derive bounds free of the system dimensions $d_x, d_u, d_y$, our criterion evaluates the OLS estimate along specific directions.
Assumption~\ref{assum:markov-sep} implies that, for every $k \neq j$, there exist unit vectors $(u_k, v_k)$ satisfying $u_k^\top (G_{[k, j]} - G_{[j, j]}) v_k = \opnorm{G_{[k, j]} - G_{[j, j]}} \ge \gamma$.
We refer to these as the \textit{critical directions}, which can be pre-computed from the collection of candidate models.

\begin{algcriteria}
\begin{algorithmic}[1]
    \State Input: an observed trajectory $y_1, \dots, y_\tau$ of length $\tau$, active controller $j$.
    \State Let $\mathcal{O}_{[j,j]}$ be the observability matrix of the model $(C_{[j,j]}, A_{[j,j]}, B_{[j,j]})$, compute
    \[\hat{x}_1 = \mathcal{O}_{[j,j]}^\dagger [y_\nu, \dots, y_1]^\top.\]
    \State Compute a predicted trajectory
    \[\widetilde{y}_{t} = y_t - C_{[j,j]} A_{[j,j]}^{t-1} \hat{x}_1 - G_{[j,j]} z_t.\]
    \ForAll{critical directions $(u_k, v_k), k = 1, \dots, N$}
    \State Compute OLS estimate $\widehat{\Delta}_k$ on data points $\left\{(v_k^\top z_t, u_k^\top \widetilde{y}_t)\right\}_{t=\nu+h+1}^\tau$
    \If{$|\widehat{\Delta}_k| > \gamma$} \Return 0
    \EndIf
    \EndFor
    \State \Return 1
\end{algorithmic}
\caption{System identification criterion $S^{(2)}$}
\label{alg:sys-id}
\end{algcriteria}

We state the theoretical guarantees for this criterion below.
And the proof of this result in given in Appendix~\ref{sec:proof-sysid}.

\begin{proposition}
\label{thm:sys-id}
    For a sufficiently long trajectory length
    \[\tau \ge T_2 \asymp \nu + h + \frac{\sigma_r^2}{\sigma_u^2 \gamma^2} \log(N / \delta_2),\]
    and when controller $j$ is not destabilizing the underlying plant, the system identification criterion $S^{(2)}(y_1, \dots, y_\tau; j)$ satisfies:
    \begin{itemize}[wide, labelindent=3pt]
    \item If we have a matching controller ($i_\star = j$), then $S^{(2)} = 1$ with probability $\ge 1 - \delta_2$.
    \item Otherwise, if $i_\star \neq j$, then $S^{(2)} = 0$ with probability $\ge 1/2$.
    \end{itemize}
\end{proposition}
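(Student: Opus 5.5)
The plan is to analyze, for each critical direction $(u_k,v_k)$, the scalar OLS estimate
\[
\widehat\Delta_k = \frac{\sum_t (u_k^\top \widetilde y_t)(v_k^\top z_t)}{\sum_t (v_k^\top z_t)^2}
= u_k^\top \Delta v_k + \frac{\sum_t \big(u_k^\top r_t + u_k^\top \Delta (I - v_kv_k^\top) z_t\big)(v_k^\top z_t)}{\sum_t (v_k^\top z_t)^2},
\]
where the sum runs over $t=\nu+h+1,\dots,\tau$. Because $u_t$ is i.i.d.\ Gaussian, $v_k^\top z_t$ is $\mathcal{N}(0,\sigma_u^2)$ and independent of $(I-v_kv_k^\top)z_t$. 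It is also independent of the residual $r_t$, as noted in the text. Thus the numerator is a martingale-type sum $\sum_t \epsilon_t g_t$ with $g_t := v_k^\top z_t$. The $g_t$ are $h$-dependent rather than independent, so I will split the index set into $h$ interleaved subsequences with step $h$. Within each subsequence the $g_t$ are independent, and I apply a self-normalized bound to each before union-bounding over the $h$ subsequences.

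\textbf{Step 1 (denominator).} A scalar chi-square lower tail bound gives $\sum_t g_t^2 \gtrsim \sigma_u^2(\tau-\nu-h)$ with probability $1-\delta_2/(2N)$ once $\tau-\nu-h\gtrsim\log(N/\delta_2)$. No dimension factors appear, since the regressor is scalar.

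\textbf{Step 2 (matched case, $i_\star=j$).} Here $\Delta=0$, so only the $u_k^\top r_t$ noise remains. I need $u_k^\top r_t$ to be $\sigma_r^2$-sub-Gaussian. I will argue this as in Section~\ref{sec:instability}. The term $e_t - C A^{t-1}\hat x_1$ equals a linear combination of noises through stable Toeplitz operators plus the initial-state estimation error $C A^{t-1}(x_1-\hat x_1) = -CA^{t-1}\mathcal{O}^\dagger\xi$, using \eqref{equ:obs-dynamics-short}. That error has norm $O(1/\varepsilon_c)$ times $\mathcal{H}_\infty$ norms, by Assumption~\ref{assum:observability}. Crucially, the unknown $x_1$ cancels exactly, so the state history from earlier episodes does not matter. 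The $\widehat x_1$ term uses $u_1,\dots,u_{\nu-1}$, which overlap $z_t$ only for $t \le \nu+h$; starting the regression at $t=\nu+h+1$ keeps $r_t$ independent of $g_t$.

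Conditioned on the $g$'s, the numerator is then $\sigma_r^2\sum g_t^2$-sub-Gaussian. Hence $|\widehat\Delta_k| \lesssim \sigma_r\sqrt{\log(N/\delta_2)/(\sigma_u^2 (\tau-\nu-h))}$, which is below $\gamma$ once $\tau\ge T_2$. A union bound over the $N$ directions gives $S^{(2)}=1$ with probability $\ge 1-\delta_2$. A technical wrinkle is that the $r_t$ are correlated across $t$. I would handle this either by the interleaving trick or by writing $\sum_t r_t g_t$ as a Gaussian quadratic-form/bilinear form in independent noises and applying Hanson--Wright.

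\textbf{Step 3 (mismatched but stabilizing case).} Take $k=i_\star$, so that $|u_k^\top\Delta v_k| = \opnorm{\Delta}\ge\gamma$ by Assumption~\ref{assum:markov-sep}. Here $r_t$ need not be small in norm, since $\mathcal{O}_{[j,j]}^\dagger$ applied to mismatched data could be large. I only need a constant-probability event, so I will use symmetry. Conditional on all variables except the signs, the noise term $N_k$ (the fraction above) is a function that is odd under the joint sign flip $u_\cdot \mapsto -u_\cdot$, $w,\eta\mapsto -w,-\eta$. The data are linear in all Gaussians (zero initial state $x_0=0$, so $x_1$ is linear in past noises as well), hence $N_k$ is odd. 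Its distribution is therefore symmetric about $0$, and $\Pr(N_k\,\mathrm{sign}(u_k^\top\Delta v_k)\ge 0)\ge 1/2$. On that event $|\widehat\Delta_k|\ge\gamma$, so $S^{(2)}=0$. Justifying this symmetry cleanly, given dependence on earlier episodes through $x_1$ (which also flips sign under the global flip), is the step I expect to need the most care. In particular, I must confirm that the earlier episodes' controller choices do not depend on the noise signs. If they do, I would instead condition on $x_1$ and the first $\nu$ samples and flip only the later noises. Whether the $\hat x_1$ part then remains odd is the main obstacle to resolve.
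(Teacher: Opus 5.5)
\textbf{Step 3 fails as stated.} Apply the joint flip of all Gaussians $u,w,\eta$, including those of earlier episodes. Since $x_0=0$, this sends $x_1\mapsto -x_1$. Then $y_t$, $\hat x_1$, $z_t$ and $\widetilde y_t$ all change sign. So the numerator $\sum_t(u_k^\top\widetilde y_t)(v_k^\top z_t)$ and the denominator $\sum_t(v_k^\top z_t)^2$ are both unchanged.

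Hence $\widehat\Delta_k$, and with it $N_k=\widehat\Delta_k-u_k^\top\Delta v_k$, is \emph{invariant} under the flip, not odd. Because the data are linear in the noise, numerator and denominator are both degree-two forms, and their ratio is even. The same evenness holds for every score, so the past switching sequence is flip-invariant and your worry about the history is moot. But the flip then says nothing about the sign of $N_k$.

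Your fallback has the same defect, and the obstacle is not where you place it. After conditioning on $x_1$ and $y_1,\dots,y_\nu$, $\hat x_1$ is fixed. Its contribution $\sum_t(u_k^\top C_{[j,j]}A_{[j,j]}^{t-1}\hat x_1)(v_k^\top z_t)$ is genuinely odd in the flipped inputs. What breaks oddness is that the later noise you flip also drives $r_t$. So $(u_k^\top r_t)(v_k^\top z_t)$ contains products of two flipped quantities.

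\textbf{The missing idea is to flip the regressors but not the residuals.} The paper does this. For $k=i_\star$ it uses $\Delta^\top u_{i_\star}=\opnorm{\Delta}v_{i_\star}$, so your extra term $u_k^\top\Delta(I-v_kv_k^\top)z_t$ vanishes identically. This gives $\widehat\Delta_{i_\star}=\opnorm{\Delta}+\sum_t(u_{i_\star}^\top r_t)(v_{i_\star}^\top z_t)/\sum_t(v_{i_\star}^\top z_t)^2$. The paper then asserts the second term is odd in the $z_t$'s with $r_t$ held fixed, hence nonnegative with probability at least $1/2$.

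Be aware that the paper supports this only by the per-time independence of $r_t$ and $z_t$. The argument actually needs $\{r_t\}$ independent of the whole sequence $\{z_t\}$. That fails because $r_t$ contains the input tail $\sum_{s=h+1}^{t-1}C_{[i_\star,j]}A_{[i_\star,j]}^{s-1}B_{[i_\star,j]}u_{t-s}$, whose inputs are regressors at earlier times. A rigorous version must split off this even, zero-mean bilinear piece and show it is small relative to $\sum_t(v_k^\top z_t)^2\asymp\sigma_u^2(\tau-\nu-h)$. It does concentrate, because $A_{[i_\star,j]}$ is stable, but this in general costs a margin above $\gamma$ or a constant below $1/2$.

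\textbf{Step 2 is a sound alternative to the paper's self-normalized bound.} In the matched case the unknown $x_1$ cancels, so the numerator is a zero-mean Gaussian bilinear form. Applying Hanson--Wright to it correctly handles the cross-time correlation of $r_t$, which is not a martingale difference sequence. Check whether the overlapping windows cost a factor of $h$.
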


We note that the ratio $\frac{\sigma_r^2}{\sigma_u^2}$ in our bound corresponds to the inverse signal-to-noise ratio of the exploratory signal $u_t$.
Also, the $1/\gamma^2$ term matches the standard sample complexity of estimating unknown parameters in the learning theory literature.
By combining both Criterion~\ref{alg:instability} and Criterion~\ref{alg:sys-id}, the matching controller achieves the highest expected score over repeated episodes, thus provably identify the most suitable controller.

\section{Main Result}
With the guarantees of the two evaluation criteria established, we now analyze the performance of our supervisory switching control algorithm.
To balance exploration and exploitation, we draw inspiration from the Upper Confidence Bound (UCB) algorithm~\cite{auer2002finite,audibert2010best}.
At each episode, we select the controller by maximizing its empirical average score plus an exploration bonus.
The magnitude of this bonus encourages exploration of controllers with limited interactions, and the parameter $a_\ell$ controls the exploration strength.

We emphasize that standard UCB analysis fails in our setting due to the state dependency between different episodes and the requirement for closed-loop system stability.
However, by leveraging the properties of the criterion $S$ (Propositions~\ref{thm:instability} and~\ref{thm:sys-id}), we can isolate the empirical average scores from the initial state of each episode.
This effectively allows for a UCB-style analysis by decoupling the episodes.

We present two distinct choices for the exploration parameter $a_\ell$.
The first choice prioritizes aggressive, uniform exploration, while the second dynamically scales to provide a more refined trade-off between exploration and exploitation.
We demonstrate that both variants enable the framework to identify the matching controller in $O(N \log N)$ episodes while simultaneously ensuring system stability.

\begin{theorem}
\label{thm:ucb-unified}
    Suppose the length of each episode satisfies $\tau \ge \max(T_1, T_2)$, and total number of episodes equal to $L = O(N \log (N/\delta_3))$.
    Let the exploration parameter set to either $a_\ell = \frac{L}{72N}$ or $a_\ell = \frac{1}{2} \log(\frac{\pi^2 N \ell^2}{6\delta_3})$.
    Then with probability $1-\delta_3$, Algorithm~\ref{alg:meta} satisfies the following properties:
    \begin{itemize}[wide, labelindent=3pt]
    \item The algorithm commits to the matching controller at termination, i.e. $i_\star = \hat{i}$.
    \item The closed-loop trajectory has finite $L_2$-gain with respect to disturbances $u_t$ and $w_t$, i.e. there exist constants $C_0, C_1$ so that for any $T > \tau L$,
    \begin{align*} 
        \sum_{t=1}^T \norm{x_t}^2 
        \le C_0 \left(\sum_{t=1}^{\tau L} \norm{u_t}^2 + \norm{w_t}^2\right)
        + C_1 \left(\sum_{t=\tau L + 1}^T \norm{u_t}^2 + \norm{w_t}^2\right).
    \end{align*}
    \end{itemize}
\end{theorem}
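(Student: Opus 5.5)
Our plan is to reduce the problem to a bandit with a fixed gap, using the conditional guarantees of Propositions~\ref{thm:instability} and~\ref{thm:sys-id}, and then bound the state across episodes.

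\textbf{Score gap.} Since the score is $S = \lfloor \frac12(S^{(1)}+S^{(2)})\rfloor$, we have $S=1$ exactly when both criteria return 1. Conditional on the history $\mathcal{F}_{\ell-1}$ up to the start of episode $\ell$, the two propositions control the episode's score regardless of the initial state $x_1$. This is because each criterion subtracts the transient computed from $\hat{x}_1$, and the propositions hold for an arbitrary (even adversarial) $x_1$; I will check that their proofs condition on $x_1$. The conditional score probabilities are then as follows.
\begin{itemize}[wide, labelindent=3pt]
\item For the matching controller, $\Pr(S=1\mid\mathcal{F}_{\ell-1}) \ge 1-\delta_1-\delta_2$.
\item For a destabilizing controller, $\Pr(S=1\mid\mathcal{F}_{\ell-1}) \le 3/5$.
\item For a stabilizing but mismatched controller, $\Pr(S=1\mid\mathcal{F}_{\ell-1})\le 1/2$.
\end{itemize}
Taking $\delta_1,\delta_2$ small, say $\le 1/20$, gives a conditional mean gap $\Delta_{\min}\ge 1/3$ between $i_\star$ and every other arm.

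\textbf{Concentration.} For each arm $i$, the centered scores $s_i[r] - \EE[s_i[r]\mid\mathcal{F}]$ form a bounded martingale difference sequence. Azuma--Hoeffding, combined with a union bound over arms and over $q\le L$, replaces the i.i.d.\ Hoeffding step of standard UCB.
\begin{itemize}[wide, labelindent=3pt]
\item For $a_\ell=\frac12\log(\pi^2N\ell^2/6\delta_3)$, the weights $\sum_\ell 6/(\pi^2\ell^2)=1$ give simultaneous confidence bounds $|\bar s_i(q)-\mu_i(q)|\le\sqrt{a_\ell/q}$, where $\mu_i(q)$ is the average of the conditional means (or a sandwich bound on it).
\item For $a_\ell=L/(72N)$, I will follow the UCB-E analysis of \citep{audibert2010best}, with constant $1/72$ tuned to $\Delta_{\min}^2$.
\end{itemize}

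\textbf{Counting pulls.} On the good event, I run the standard argument. A suboptimal arm $i$ is chosen at episode $\ell$ only if $\sqrt{a_\ell/q_{i,\ell}} \gtrsim \Delta_{\min}/2$, which gives $Q_i(L)\lesssim a_L/\Delta_{\min}^2 = O(\log(NL/\delta_3))$. Hence all suboptimal arms together take $O(N\log(N/\delta_3))$ episodes. With $L = cN\log(N/\delta_3)$ and $c$ large, $Q_{i_\star}(L) > L/N \ge \max_{i\ne i_\star}Q_i(L)$, so $\hat{i}=i_\star$.

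\textbf{$L_2$-gain.} The total state growth during exploration is finite because the number of episodes $L$ is finite. Over each episode the state map is $x\mapsto A_{[i_\star,j]}^{\tau}x+\text{(input terms)}$, and its norm is bounded by $\max_{j}\opnorm{A_{[i_\star,j]}}^\tau$. Chaining over $L$ episodes gives $\sum_{t\le\tau L}\norm{x_t}^2\le C_0\sum_{t\le \tau L}(\norm{u_t}^2+\norm{w_t}^2)$ with a (possibly large) $C_0$, using $x_0=0$. After time $\tau L$, the closed loop is $M_{[i_\star,i_\star]}$, which is stable. By standard $\mathcal{H}_\infty$ bounds, the tail sum is controlled by $C_1\sum_{t>\tau L}(\norm{u_t}^2+\norm{w_t}^2)$ plus a transient term from $x_{\tau L}$. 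That transient is at most $\hinfnorm{A_{[i_\star,i_\star]}}^2\norm{x_{\tau L}}^2$, which we absorb into the $C_0$ term.

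\textbf{Main obstacle.} The hardest step is justifying the conditional score probabilities uniformly over the inherited initial state. This is especially delicate in the destabilizing case, where $x_1$ may be huge and correlated with past noise. My first attempt will use that the fresh process noise after step $\nu$ is independent of $\mathcal{F}_{\ell-1}$ and of $\hat x_1$'s error. An anti-concentration bound for this component along the explosive eigen-direction then gives detection probability at least $2/5$ for every fixed $x_1$, which suffices after conditioning.
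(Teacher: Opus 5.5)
Your proposal matches the paper's proof essentially step for step. It uses conditional score bounds from Propositions~\ref{thm:instability} and~\ref{thm:sys-id} (which the paper, like you, simply takes to hold conditionally on the episode's initial state), Azuma--Hoeffding along each arm's pull sequence, a union bound over $(i,q)$ for $a_\ell = L/(72N)$ and the $6\delta_3/(\pi^2 q^2)$ weighting for the logarithmic $a_\ell$, the standard UCB pull count, and a crude $\max_{i,j}\opnorm{A_{[i,j]}}$ bound over the exploration phase followed by the stable tail, with the transient from $x_{\tau L}$ absorbed into $C_0$. One arithmetic slip: $\delta_1=\delta_2=1/20$ gives a gap of $9/10-3/5=3/10$, not $1/3$; the paper takes $\delta_1=\delta_2=1/30$ so that $14/15-3/5=1/3$, which yields the bound $36a_\ell = L/(2N)$ on suboptimal pulls for $a_\ell=L/(72N)$ (with gap $3/10$ the argument still closes, only with shifted constants).
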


\begin{proof}[Proof sketch]
From Propositions~\ref{thm:instability} and~\ref{thm:sys-id}, we can probabilistically bound the outcomes of our evaluation metric $S$ independently of the initial state of the episodes.
This enables us to decouple the state dependencies of different episodes.

Let $\mu^+$ denote a lower bound on the expected score of the matching controller ($j = i_\star$), and let $\mu^-$ denote an upper bound on the expected score of any non-matching controller ($j \neq i_\star$).
Recall that $Q_i(\ell)$ denotes the number of times controller $i$ has been applied through episode $\ell$.
Similar to standard UCB analysis, if Algorithm~\ref{alg:meta} selects a non-matching controller $j \neq i_\star$ at episode $\ell$, then at least one of the following three conditions must be met:
\begin{itemize}[wide, labelindent=3pt]
    \item Average score for the matching controller is too low: $\bar{s}_{i_\star}[Q_{i_\star}(\ell)] \le \mu^+ - \sqrt{\frac{a_\ell}{Q_{i_\star}(\ell)}}$.
    \item Average score for a non-matching controller is too high: $\exists j \neq i_\star$, s.t. $\bar{s}_j[Q_j(\ell)] \ge \mu^- + \sqrt{\frac{a_\ell}{Q_j(\ell)}}$.
    \item The exploration bonus is too big:  $\exists j \neq i_\star$, s.t. $2 \sqrt{\frac{a_\ell}{Q_j(\ell)}} > \mu^+ - \mu^-$.
\end{itemize}
Notice that the third condition establishes an upper bound on the number of times a non-matching controller can be selected before its exploration bonus shrinks below the expected score gap.
Let $\mathcal{E}$ denote the ``good'' event where the first two conditions do not occur.
Conditioned on $\mathcal{E}$, we can upper-bound the frequency of selecting any non-matching controller by $4(N-1)(\mu^+ - \mu^-)^{-2} a_\ell$.
We can then lower bound $\Pr(\mathcal{E})$ with martingale concentration equalities, yielding a probabilistic lower bound on the frequency of selecting the matching controller.

To establish the first part of the theorem, we evaluate our two specific choices for the exploration parameter $a_\ell$:
\begin{itemize}[wide, labelindent=3pt]
    \item In the first variant, we set $a_\ell = \frac{L}{72N}$ to ensure that the matching controller is selected at least half of the times under $\mathcal{E}$. Then we choose a sufficiently long horizon $L$ so that the event $\mathcal{E}$ occurs with probability $1-\delta_3$.
    \item In the second variant, we dynamically set $a_\ell = \frac{1}{2} \log(\frac{\pi^2 N \ell^2}{6\delta_3})$ so that the event $\mathcal{E}$ occurs with probability $1-\delta_3$ for any horizon $L$.
    Then, we pick a sufficiently large $L$ so that the matching controller is selected at least half of the times under $\mathcal{E}$.
\end{itemize}

The second part of the theorem follows from the fact that applying the matching controller results in a stable closed-loop system.
The constant $C_0$ is an upper bound on the worst-case transient energy accumulated over the $O(N \log (N/\delta_3))$ exploratory episodes.
And the constant $C_1$ is directly inherited from performance of the matching controller.

For the complete proof of this result, we refer the readers to Appendix~\ref{sec:proof-ucb}.
\end{proof}

The first part of this theorem establishes a sample complexity bound, stating that our algorithm requires $O(N \log N)$ episodes to identify the matching controller with high probability.
From Propositions~\ref{thm:instability} and~\ref{thm:sys-id}, each episode takes $O(\frac{\log N}{\varepsilon_c\varepsilon_a \gamma^2})$ steps, so that our algorithm takes $O(N \log^2 N)$ steps with respect to $N$.
This represents a significant improvement over our prior work~\citep{sun2024least} that required $O(N^2)$ steps.

The second part of this theorem addresses closed-loop stability.
We note that the transient term $C_0$ scales as $\exp(O(N \log(N/\delta_3)))$, which corresponds to the cumulative state growth incurred while exploring potentially destabilizing controllers over $O(N \log (N/\delta_3))$ episodes.
As shown by~\citet{chen2021black}, such exploration penalty is unavoidable when testing unstable controllers.
Compared to the fully-observed setting in~\citet{li2023online}, our bound contains an additional $\log(N/\delta_3)$ factor in the exponent.
This reflects the reality that our instability criterion cannot guarantee successful detection in every single episode due to partial observation, thereby necessitating repeated trials to achieve high-confidence identification.
Finally, we emphasize that the value of $C_0$ reflects a highly conservative, worst-case scenario where all destabilizing controllers are applied consecutively, which would be \textit{exceedingly rare} in practice because the UCB selection mechanism heavily favors the matching controller.
In contrast, the naive exploration strategy in our previous work~\cite{sun2024least} would commit unstable controllers all in consecutive steps and \textit{always} result in the worse case transient behavior.

\begin{remark}
    The stability guarantees of the classical estimator-based supervision are typically given in terms of a time-discounted $L_2$-norm.
    Specifically, for a forgetting factor $\lambda > 0$, \citet{hespanha2001tutorial} showed that there exist constants $\overline{C}_0, \overline{C}_1$ such that for any $T > 0$,
    \[\sum_{t=1}^T e^{-2\lambda (T-t)}\norm{x_t}^2 \le \overline{C}_0 e^{-2\lambda T} + \overline{C}_1 \sum_{t=1}^T e^{-2\lambda (T-t)} \norm{w_t}^2.\]
    In contrast, our Theorem~\ref{thm:ucb-unified} bounds the undiscounted case ($\lambda = 0$).
    This regime is not covered by classical analysis, as the traditional framework requires a dwell time that scales inversely with $\lambda$.
    Furthermore, we can provide quantitative bounds for the constants $C_0, C_1$ in terms of the problem parameters, whereas classical analysis is asymptotic and leaves the explicit values of $\overline{C}_0$ and $\overline{C}_1$ unspecified.
\end{remark}

\section{Numerical Simulation}
To numerically verify the efficacy of our Algorithm~\ref{alg:meta}, we consider an example studied in~\cite{anderson2000multiple}, where there is a second-order system with an unknown gain $k$:
\[ \left\{\frac{k(s-1)}{(s+1)(s-2)} : k \in [1, 40)\right\}.\]
This family of systems are difficult to control because they are non-minimum phase and due to the large range over the uncertainty.
So, we construct candidate models on the finite-covering $k_i = 1.2^i, i \in \{0, \dots, 20\}$, which is found to closely-approximate the uncertainty on $k$~\cite{anderson2000multiple}.

We simulate a zero-order-hold discretization of this system at 1000 Hz.
To illustrate a \textit{worst-case} scenario, we set a large initial condition sampled from $\mathcal{N}(0, 20)$, the true index to be $i_\star = 20$, and fix the algorithm's starting index to be $i_1 = 0$.
In Figure~\ref{fig:simulation}, we compare the outputs of Algorithm~\ref{alg:meta} and the estimator-based supervision.
The estimator-based supervisor is set with a dwell-time of 0.2 seconds, and forgetting factor $\lambda = 0.04$.
For our algorithm, we set the maximum episode length to be 0.2 seconds and added an early stopping rule to Criteria~\ref{alg:instability}, so we switch as soon as the instability threshold is crossed.
All policy parameters were tuned to give the best empirical behaviors for this scenario.

\begin{figure}[!htb]
    \centering
    \includegraphics[width=0.7\linewidth]{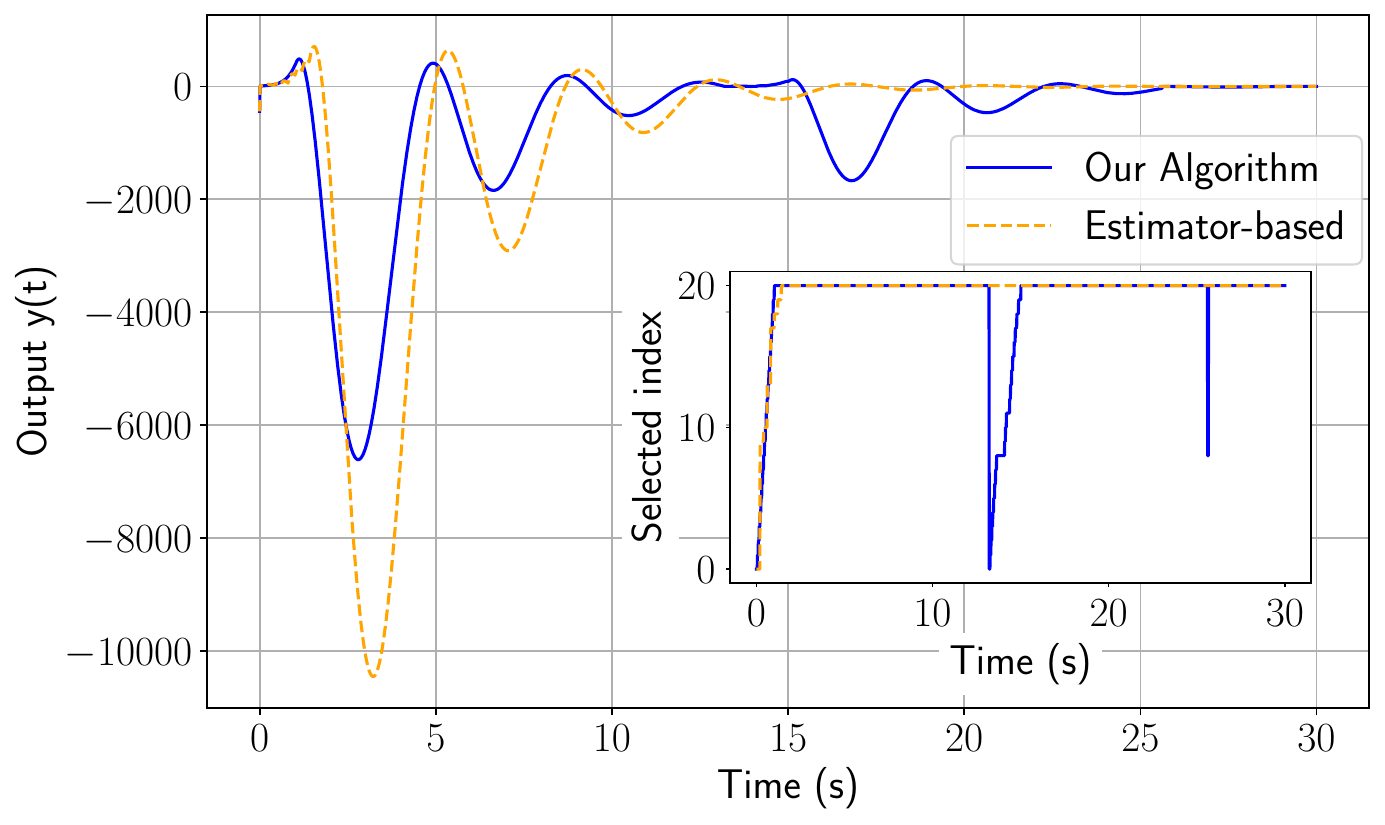}
    \caption{Numerical simulation comparing Algorithm~\ref{alg:meta} and classical estimator-based supervision. The smaller embedded plot shows the index of the candidate controllers selected by each policy.}
    \label{fig:simulation}
\end{figure}

With an exploration factor $\alpha = 0.3 \ell$, our UCB-style selection rule tests different candidates over three distinct periods to keep exploring the system.
In contrast, the estimator-based supervisor is greedy and stops switching once a suitable controller is found.
We observe that our algorithm results in a smaller transient than the estimator-based supervisor.
This is primarily due to the early-stopping rule, where our precise theoretical analysis allows for more granular adjustments of the policy parameters and switching more aggressively when the active controller is destabilizing the underlying plant. 
\section{Conclusion}
In this work, we presented a supervisory switching control framework that guarantees finite-time adaptation for unknown partially-observed linear systems with a collection of candidate controllers.
Our proposed data-driven method achieves a dimension-free sample complexity of $O(N \log^2 N)$ steps to identify the matching controller while maintaining finite $L_2$-gain, even when some of the candidate controllers risk destabilizing the underlying plant.

Looking forward, a promising direction for future research involves expanding the capacity of our evaluation criteria.
Currently, our criteria only evaluate the model associated with the active controller.
Future algorithmic extensions could significantly strengthen both safety and performance guarantees by simultaneously cross-evaluating the observed trajectory against all candidate models within a single episode.

\bibliographystyle{plainnat}
\bibliography{reference}

@inproceedings{hespanha2001tutorial,
  title={Tutorial on supervisory control},
  author={Hespanha, Joao P},
  booktitle={Lecture Notes for the workshop Control using Logic and Switching for the 40th Conf. on Decision and Contr., Orlando, Florida},
  year={2001},
}

@inproceedings{li2023online,
  title={Online switching control with stability and regret guarantees},
  author={Li, Yingying and Preiss, James A and Li, Na and Lin, Yiheng and Wierman, Adam and Shamma, Jeff S},
  booktitle={Learning for Dynamics and Control Conference},
  year={2023},
}

@inproceedings{simchowitz2020naive,
  title={Naive exploration is optimal for online {LQR}},
  author={Simchowitz, Max and Foster, Dylan},
  booktitle={International Conference on Machine Learning},
  year={2020},
}

@article{jin2018q,
  title={Is {Q}-learning provably efficient?},
  author={Jin, Chi and Allen-Zhu, Zeyuan and Bubeck, Sebastien and Jordan, Michael I},
  journal={Advances in neural information processing systems},
  volume={31},
  year={2018}
}

@article{hsu2012tail,
  title={A tail inequality for quadratic forms of subgaussian random vectors},
  author={Hsu, Daniel and Kakade, Sham M and Zhang, Tong},
  journal={Electronic Communications in Probability},
  volume={17},
  year={2012},
  publisher={Institute of Mathematical Statistics}
}

@inproceedings{audibert2010best,
  title={Best arm identification in multi-armed bandits},
  author={Audibert, Jean-Yves and Bubeck, S{\'e}bastien},
  booktitle={Conference on learning theory (COLT)},
  pages={13--p},
  year={2010}
}

@article{auer2002finite,
  title={Finite-time analysis of the multiarmed bandit problem},
  author={Auer, Peter and Cesa-Bianchi, Nicolo and Fischer, Paul},
  journal={Machine learning},
  volume={47},
  number={2},
  pages={235--256},
  year={2002},
  publisher={Springer}
}

@inproceedings{tian2023can,
  title={Can Direct Latent Model Learning Solve Linear Quadratic Gaussian Control?},
  author={Tian, Yi and Zhang, Kaiqing and Tedrake, Russ and Sra, Suvrit},
  booktitle={Learning for Dynamics and Control Conference},
  pages={51--63},
  year={2023},
  organization={PMLR}
}

@inproceedings{simchowitz2018learning,
  title={Learning without mixing: Towards a sharp analysis of linear system identification},
  author={Simchowitz, Max and Mania, Horia and Tu, Stephen and Jordan, Michael I and Recht, Benjamin},
  booktitle={Conference On Learning Theory},
  pages={439--473},
  year={2018},
}

@inproceedings{oymak2019non,
  title={Non-asymptotic identification of {LTI} systems from a single trajectory},
  author={Oymak, Samet and Ozay, Necmiye},
  booktitle={2019 American control conference (ACC)},
  pages={5655--5661},
  year={2019},
  organization={IEEE}
}

@article{abbasi2011improved,
  title={Improved algorithms for linear stochastic bandits},
  author={Abbasi-Yadkori, Yasin and P{\'a}l, D{\'a}vid and Szepesv{\'a}ri, Csaba},
  journal={Advances in neural information processing systems},
  volume={24},
  year={2011}
}

@article{laurent2000adaptive,
  title={Adaptive estimation of a quadratic functional by model selection},
  author={Laurent, Beatrice and Massart, Pascal},
  journal={Annals of statistics},
  pages={1302--1338},
  year={2000},
  publisher={JSTOR}
}

@article{tilli1998singular,
  title={Singular values and eigenvalues of non-Hermitian block Toeplitz matrices},
  author={Tilli, Paolo},
  journal={Linear algebra and its applications},
  volume={272},
  number={1-3},
  pages={59--89},
  year={1998},
  publisher={Elsevier}
}

@article{rudelson2013hanson,
  title={Hanson-Wright inequality and sub-Gaussian concentration},
  author={Rudelson, Mark and Vershynin, Roman},
  year={2013}
}

@book{wainwright2019high,
  title={High-dimensional statistics: A non-asymptotic viewpoint},
  author={Wainwright, Martin J},
  volume={48},
  year={2019},
  publisher={Cambridge university press}
}

@inproceedings{ziemann2023tutorial,
  title={A Tutorial on the Non-Asymptotic Theory of System Identification},
  author={Ziemann, Ingvar and Tsiamis, Anastasios and Lee, Bruce and Jedra, Yassir and Matni, Nikolai and Pappas, George J},
  booktitle={Conference on Decision and Control (CDC)},
  year={2023},
  organization={IEEE}
}

@inproceedings{simchowitz2019learning,
  title={Learning linear dynamical systems with semi-parametric least squares},
  author={Simchowitz, Max and Boczar, Ross and Recht, Benjamin},
  booktitle={Conference on Learning Theory},
  pages={2714--2802},
  year={2019},
  organization={PMLR}
}

@article{sarkar2021finite,
  title={Finite time LTI system identification},
  author={Sarkar, Tuhin and Rakhlin, Alexander and Dahleh, Munther A},
  journal={Journal of Machine Learning Research},
  volume={22},
  number={26},
  pages={1--61},
  year={2021}
}

@article{meng2016microgrid,
  title={Microgrid supervisory controllers and energy management systems: A literature review},
  author={Meng, Lexuan and Sanseverino, Eleonora Riva and Luna, Adriana and Dragicevic, Tomislav and Vasquez, Juan C and Guerrero, Josep M},
  journal={Renewable and Sustainable Energy Reviews},
  volume={60},
  pages={1263--1273},
  year={2016},
  publisher={Elsevier}
}

@article{bin2021hysteresis,
  title={Hysteresis-based supervisory control with application to non-pharmaceutical containment of COVID-19},
  author={Bin, Michelangelo and Crisostomi, Emanuele and Ferraro, Pietro and Murray-Smith, Roderick and Parisini, Thomas and Shorten, Robert and Stein, Sebastian},
  journal={Annual reviews in control},
  volume={52},
  pages={508--522},
  year={2021},
  publisher={Elsevier}
}

@article{aguiar2007trajectory,
  title={Trajectory-tracking and path-following of underactuated autonomous vehicles with parametric modeling uncertainty},
  author={Aguiar, A Pedro and Hespanha, Joao P},
  journal={IEEE transactions on automatic control},
  volume={52},
  number={8},
  pages={1362--1379},
  year={2007},
  publisher={IEEE}
}

@article{morse1997supervisory,
  author={Morse, A. Stephen},
  title={Supervisory control of families of linear set-point controllers. 2. robustness},
  journal={IEEE Transactions on Automatic Control},
  volume={42},
  number={11},
  pages={1500--1515},
  year={1997},
  publisher={IEEE}
}

@article{anderson2000multiple,
  title={Multiple model adaptive control. Part 1: Finite controller coverings},
  author={Anderson, Brian DO and Brinsmead, Thomas S and De Bruyne, Franky and Hespanha, Joao and Liberzon, Daniel and Morse, A Stephen},
  journal={Int. J. Robust Nonlinear Control},
  year={2000},
  publisher={Wiley Online Library}
}

@incollection{ljung1998system,
  title={System identification},
  author={Ljung, Lennart},
  booktitle={Signal analysis and prediction},
  pages={163--173},
  year={1998},
  publisher={Springer}
}

@book{liberzon2003switching,
  title={Switching in systems and control},
  author={Liberzon, Daniel},
  volume={190},
  publisher={Springer},
  year={2003}
}

@article{morse1996supervisory,
  title={Supervisory control of families of linear set-point controllers-part i. exact matching},
  author={Morse, A Stephen},
  journal={IEEE transactions on Automatic Control},
  volume={41},
  number={10},
  pages={1413--1431},
  year={1996},
  publisher={IEEE}
}

@article{shamma2002gain,
  title={Gain scheduling: Potential hazards and possible remedies},
  author={Shamma, Jeff S and Athans, Michael},
  journal={IEEE Control Systems Magazine},
  volume={12},
  number={3},
  pages={101--107},
  year={2002},
  publisher={IEEE}
}

@article{rugh2000research,
  title={Research on gain scheduling},
  author={Rugh, Wilson J and Shamma, Jeff S},
  journal={Automatica},
  volume={36},
  number={10},
  pages={1401--1425},
  year={2000},
  publisher={Elsevier}
}

@book{wu1995control,
  title={Control of linear parameter varying systems},
  author={Wu, Fen},
  year={1995},
  publisher={University of California, Berkeley}
}

@book{mohammadpour2012control,
  title={Control of linear parameter varying systems with applications},
  author={Mohammadpour, Javad and Scherer, Carsten W},
  year={2012},
  publisher={Springer Science \& Business Media}
}

@inproceedings{chen2021black,
  title={Black-box control for linear dynamical systems},
  author={Chen, Xinyi and Hazan, Elad},
  booktitle={Conference on Learning Theory},
  pages={1114--1143},
  year={2021},
  organization={PMLR}
}

@inproceedings{bakshi2023new,
  title={A new approach to learning linear dynamical systems},
  author={Bakshi, Ainesh and Liu, Allen and Moitra, Ankur and Yau, Morris},
  booktitle={Proceedings of the 55th Annual ACM Symposium on Theory of Computing},
  pages={335--348},
  year={2023}
}

@article{kim2025online,
  title={Online Bandit Nonlinear Control with Dynamic Batch Length and Adaptive Learning Rate},
  author={Kim, Jihun and Lavaei, Javad},
  journal={Transactions on Machine Learning Research},
  year={2025},
  publisher={TMLR}
}

@inproceedings{muehlebach2026the,
    title={The Sample Complexity of Online Reinforcement Learning: A Multi-model Perspective},
    author={Michael Muehlebach and Zhiyu He and Michael I. Jordan},
    booktitle={International Conference on Learning Representations},
    year={2026},
}

@inproceedings{li2024learning,
  title={Learning the uncertainty sets of linear control systems via set membership: A non-asymptotic analysis},
  author={Li, Yingying and Yu, Jing and Conger, Lauren and Kargin, Taylan and Wierman, Adam},
  booktitle={International Conference on Machine Learning},
  year={2024}
}

@inproceedings{sun2024least,
  title={A least-square method for non-asymptotic identification in linear switching control},
  author={Sun, Haoyuan and Jadbabaie, Ali},
  booktitle={2024 IEEE 63rd Conference on Decision and Control (CDC)},
  pages={2993--2998},
  year={2024},
  organization={IEEE}
}

\appendix

\section{Mathematical Preliminaries}
This section several technical results that will become useful in our proofs.

In this work, we employ system identification method based on ordinary least square (OLS).
Consider a linear model
\[ y_t = \Theta_* z_t + r_t, \, t = 1, \dots, T, \]
where the sequence of random vectors $\{y_t\}_{t=1}^T$ and $\{z_t\}_{t=1}^T$ are adapted to a filtration $\{\mathcal{F}_t\}_{t \ge 0}$ and $r_t$ are residuals/noise that are $\sigma^2$-sub-Gaussian.
Then, OLS seeks to recover the true parameter $\Theta_\star$ through the following estimate:
\[\hat\Theta = \left(\sum_{t=1}^{T} y_t z_t^\top\right) V^{-1}, \text{where } V= \sum_t z_t z_t^\top.\]
We can bound the estimation error $\hat\Theta - \Theta_\star$ with the self-normalized martingale tail bound.

\begin{proposition}[Theorem 1 in \cite{abbasi2011improved}]
\label{thm:self-normalized}
    Consider $\{z_t\}_{t=1}^T$ adapted to a filtration $\{\mathcal{F}_t\}_{t \ge 0}$.
    Let $V = \sum_{t=1}^T z_t z_t^\top$.
    If the scalar-valued random variable $r_t \mid \mathcal{F}_{t-1}$ is $\sigma^2$-sub-Gaussian, then for any $V_0 \succeq 0$,
    \[ \norm{\sum_{t=1}^T z_t r_t}_{(V + V_0)^{-1}}^2 \le 2 \sigma^2 \log\left(\delta^{-1} \frac{\mathrm{det}(V + V_0)^{-1/2}}{\mathrm{det}( V_0)^{1/2}}\right)\]
    with probability $1-\delta$.
\end{proposition}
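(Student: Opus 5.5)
The plan is the standard ``method of mixtures'' argument of \cite{abbasi2011improved}. I will use the natural reading of the hypotheses: $z_t$ is $\mathcal{F}_{t-1}$-measurable, $r_t$ is $\mathcal{F}_t$-measurable, and $\EE[\exp(\lambda r_t)\mid\mathcal{F}_{t-1}] \le \exp(\sigma^2\lambda^2/2)$ for all real $\lambda$. I will also take $V_0 \succ 0$, which is needed for $\det(V_0)$ in the denominator to make sense. The determinant ratio should read $\det(V+V_0)^{1/2}\det(V_0)^{-1/2}$, so the logarithm is nonnegative; the $-1/2$ exponent in the displayed statement appears to be a typo. Write $S_t = \sum_{s\le t} z_s r_s$ and $V_t = \sum_{s \le t} z_s z_s^\top$.

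First, fix a deterministic vector $\lambda$ of the same dimension as $z_t$ and set
\[ M_t(\lambda) = \exp\left( \frac{\lambda^\top S_t}{\sigma} - \frac{1}{2}\lambda^\top V_t \lambda \right). \]
Since $z_t$ is $\mathcal{F}_{t-1}$-measurable, conditional sub-Gaussianity of $r_t$ applied with the scalar $\lambda^\top z_t/\sigma$ gives
\[ \EE\left[\exp\left(\frac{\lambda^\top z_t r_t}{\sigma} - \frac{1}{2}(\lambda^\top z_t)^2\right) \Bigm| \mathcal{F}_{t-1}\right] \le 1. \]
Hence $M_t(\lambda)$ is a nonnegative supermartingale with $M_0 = 1$, and by the tower property $\EE[M_T(\lambda)] \le 1$.

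Second, integrate out $\lambda$ against a Gaussian prior $\lambda \sim \mathcal{N}(0, V_0^{-1})$ drawn independently of everything else. Define $\bar{M}_T = \int M_T(\lambda)\, d\mathcal{N}(0,V_0^{-1})(\lambda)$. By Fubini (Tonelli, since the integrand is nonnegative), $\EE[\bar{M}_T] \le 1$. The integral can be computed in closed form by completing the square in $\lambda$: the exponent is
\[ \frac{\lambda^\top S_T}{\sigma} - \frac{1}{2}\lambda^\top (V_T+V_0)\lambda, \]
so the Gaussian integral gives
\[ \bar{M}_T = \left(\frac{\det V_0}{\det(V_T+V_0)}\right)^{1/2} \exp\left(\frac{1}{2\sigma^2}\norm{S_T}^2_{(V_T+V_0)^{-1}}\right). \]

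Third, apply Markov's inequality: $\Pr(\bar{M}_T \ge 1/\delta) \le \delta$. On the complementary event, taking logarithms and rearranging yields
\[ \norm{S_T}^2_{(V+V_0)^{-1}} \le 2\sigma^2 \log\left(\delta^{-1}\, \det(V+V_0)^{1/2}\det(V_0)^{-1/2}\right), \]
which is the claim. Because $T$ is fixed here, no stopping-time argument is needed. A uniform-in-time version would follow by applying the same Markov step at a stopping time, using the supermartingale property and Fatou's lemma.

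The main point requiring care is the first step. The supermartingale property depends on $z_t$ being predictable, i.e.\ known before $r_t$ is revealed, so that the sub-Gaussian bound can be applied with the random but $\mathcal{F}_{t-1}$-measurable coefficient $\lambda^\top z_t$. In the paper's application this holds because the exploratory inputs in $z_t$ are independent of the residuals $r_t$. The Gaussian integral in the second step is routine by comparison.
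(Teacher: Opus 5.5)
Your argument is correct. It is exactly the method-of-mixtures proof of Theorem~1 in \cite{abbasi2011improved}, which the paper cites without reproducing; you simply drop the stopping-time step because $T$ is fixed. Your corrections $V_0 \succ 0$ and exponent $+1/2$ on $\det(V+V_0)$ are right, and they agree with how the bound is used in the proof of Proposition~\ref{thm:sys-id}.

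The only caution concerns your closing aside about the application. The argument does not need independence of $z_t$ and $r_t$ at the same index. It needs $r_t$ to be conditionally mean-zero sub-Gaussian given an $\mathcal{F}_{t-1}$ containing $z_1,\dots,z_t$ and $r_1,\dots,r_{t-1}$. In the paper's setting this generally fails: $r_t$ shares noise terms with $r_{t-1}$, and it contains $u_{t-h-1}$, which also appears in $z_{t-1}$. This does not affect your proof of the proposition as stated.
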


We also need a concentration inequality over a quadratic function on Gaussian random variables (e.g. chi-squared distributions).
\begin{proposition}
\label{thm:hanson-wright}
    Consider a symmetric matrix $M \in \mathbb{S}^{d \times d}$ and a random vector $g \sim \mathcal{N}(0, I_{d \times d})$.
    Then, for any $\delta \in (0, 1/e)$, we have
    \begin{align*}
    \Pr\left(g^\top M g > \tr(M) + 2 \norm{M}_F \sqrt{\log(1/\delta)} + 2\opnorm{M} \log(1/\delta) \right) &< \delta, \\
    \Pr\left(g^\top M g< \tr(M) - 2\norm{M}_F \sqrt{\log(1/\delta)}\right) &< \delta
\end{align*}
\end{proposition}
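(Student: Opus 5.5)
The plan is to reduce to a weighted sum of independent centered $\chi^2_1$ variables and then run a Chernoff argument, in the style of \citet{laurent2000adaptive}. First I diagonalize $M = U \Lambda U^\top$ with $U$ orthogonal and $\Lambda = \mathrm{diag}(\lambda_1, \dots, \lambda_d)$. By rotation invariance, $g' := U^\top g \sim \mathcal{N}(0, I_{d\times d})$, so
\[ g^\top M g - \tr(M) = \sum_{i=1}^d \lambda_i \left( (g_i')^2 - 1 \right) =: Z. \]
In these terms $\norm{M}_F^2 = \sum_i \lambda_i^2$ and $\opnorm{M} = \max_i |\lambda_i|$. The statement then becomes a tail bound for $Z$ in terms of these two eigenvalue functionals.

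\textbf{Upper tail.} The key step is a scalar cumulant bound. For $X = g_1^2 - 1$ and $|s\lambda| < 1/2$,
\[ \log \EE e^{s\lambda X} = -s\lambda - \tfrac12 \log(1 - 2s\lambda). \]
I will show this is at most $\frac{s^2\lambda^2}{1 - 2s|\lambda|}$ by expanding the logarithm as a power series:
\[ -u - \tfrac12\log(1-2u) = \sum_{k\ge 2} \frac{(2u)^k}{2k} \le u^2 \sum_{k \ge 0} (2|u|)^k, \]
with $u = s\lambda$. This bound works for either sign of $\lambda$. By independence, for $0 < s < 1/(2\opnorm{M})$,
\[ \log \EE e^{sZ} \le \frac{s^2 \norm{M}_F^2}{1 - 2s\opnorm{M}}. \]
So $Z$ is sub-gamma with variance factor $v = 2\norm{M}_F^2$ and scale $c = 2\opnorm{M}$. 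The standard Chernoff optimization for sub-gamma variables (Boucheron--Lugosi--Massart, Lemma 2.4-type) gives $\Pr(Z > \sqrt{2vx} + cx) \le e^{-x}$. Setting $x = \log(1/\delta)$ yields exactly $2\norm{M}_F\sqrt{\log(1/\delta)} + 2\opnorm{M}\log(1/\delta)$.

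\textbf{Lower tail.} Here I bound $\EE e^{-sZ}$. For each $\lambda_i \ge 0$, the scalar term is $s\lambda_i - \tfrac12\log(1 + 2s\lambda_i)$. Using $\log(1+x) \ge x - x^2/2$, this is at most $s^2\lambda_i^2$ for all $s > 0$, with no restriction on $s$. Hence $-Z$ is sub-Gaussian with variance proxy $2\norm{M}_F^2$. Chernoff then gives $\Pr(Z < -2\norm{M}_F\sqrt{\log(1/\delta)}) \le \delta$, which is the second inequality.

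\textbf{Main obstacle.} The only delicate point is this lower-tail bound. The clean sub-Gaussian form, with no $\opnorm{M}$ term, genuinely requires $\lambda_i \ge 0$. For a negative eigenvalue the term $-s\lambda_i X$ has a heavy upper tail, so the bound must pick up a scale term. I would therefore state the second inequality under $M \succeq 0$, which covers the chi-squared-type applications later in the paper. For general symmetric $M$, I would instead apply the upper-tail bound to $-M$, which gives the lower tail with the additional $2\opnorm{M}\log(1/\delta)$ term. The restriction $\delta < 1/e$ only ensures $\log(1/\delta) \ge 1$, and is not otherwise needed.
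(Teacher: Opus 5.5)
Your proof is correct and is essentially the argument the paper relies on: the paper gives no proof of its own beyond citing Lemma 1 of Laurent and Massart and Proposition 1 of Hsu, Kakade and Zhang, and those proofs are exactly this rotate-to-diagonal, bound-the-$\chi^2_1$-cumulant, invert-the-sub-gamma-Chernoff-bound computation (your $|u|$ form of the series bound also gives the upper tail for every symmetric $M$, which those references do not state). Your caveat on the lower tail is correct and worth keeping: for indefinite $M$ the second inequality as written is false (for example $d=1$, $M=-1$, $\delta=e^{-4}$ gives $\Pr(g^2>5)\approx 0.025>\delta\approx 0.018$), the cited lemmas only cover $M\succeq 0$, and $M\succeq 0$ is the only case the paper uses, in the proof of Proposition~\ref{thm:sys-id}.
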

We note this is a simplified version of the Hanson-Wright inequality~\cite{rudelson2013hanson}, and this particular can be derive from Lemma 1 of~\cite{laurent2000adaptive} and Proposition 1 of~\cite{hsu2012tail}.

Next, we employ the following concentration inequality on martingales, which is crucial to the proof of our main result because the outcomes of different episodes are not independent.
\begin{proposition}[Corollary 2.20 in \cite{wainwright2019high}]
\label{thm:azuma-hoeffding}
Let $\{D_k\}_{k=1}^\infty$ be a martingale difference sequence adapted to a filtration $\{\mathcal{F}_k\}_{k=1}^\infty$.
If $D_k \in [0, 1]$ almost surely for all $k$, then for any $t > 0$,
\[\Pr\left(\frac{1}{n}\left|\sum_{k=1}^n D_k\right| \ge t\right) \le 2 \exp(-2nt^2).\]
\end{proposition}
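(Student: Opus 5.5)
We would prove this by the standard Chernoff-plus-Hoeffding-lemma route, carried out conditionally on the filtration so that no independence is needed.

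\textbf{Reading of the hypothesis.} A martingale difference satisfies $\EE[D_k \mid \mathcal{F}_{k-1}] = 0$. If $D_k$ were literally confined to $[0,1]$, this would force $D_k = 0$ almost surely and the claim would be trivial. We therefore read the condition as saying that, conditionally on $\mathcal{F}_{k-1}$, $D_k$ lies in an interval $[a_k, a_k+1]$ of length one, with $a_k$ being $\mathcal{F}_{k-1}$-measurable. This is the case used in the main proof: there $D_k = X_k - \EE[X_k \mid \mathcal{F}_{k-1}]$, where $X_k \in \{0,1\}$ is an episode score, so $a_k = -\EE[X_k \mid \mathcal{F}_{k-1}]$.

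\textbf{Step 1: conditional Hoeffding lemma.} We would show that for every $\lambda \in \RR$,
\[ \EE\left[e^{\lambda D_k} \mid \mathcal{F}_{k-1}\right] \le e^{\lambda^2/8}. \]
Fix the conditioning and write $D = D_k$, $a = a_k$, $b = a+1$. By convexity of $x \mapsto e^{\lambda x}$ on $[a,b]$,
\[ e^{\lambda D} \le \frac{b-D}{b-a}e^{\lambda a} + \frac{D-a}{b-a}e^{\lambda b}. \]
Taking conditional expectation and using $\EE[D \mid \mathcal{F}_{k-1}] = 0$ gives the bound $e^{\varphi(\lambda)}$, where
\[ \varphi(u) = -up + \log\left(1-p+pe^{u}\right), \qquad p = -a \in [0,1]. \]
One checks that $\varphi(0) = \varphi'(0) = 0$. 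Moreover $\varphi''(u) = q(1-q) \le 1/4$, where $q = pe^u/(1-p+pe^u) \in [0,1]$. Taylor's theorem then gives $\varphi(\lambda) \le \lambda^2/8$.

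\textbf{Step 2: iterate via the tower property.} Let $S_n = \sum_{k=1}^n D_k$. Since $S_{n-1}$ is $\mathcal{F}_{n-1}$-measurable, Step 1 gives
\[ \EE\left[e^{\lambda S_n}\right] = \EE\left[e^{\lambda S_{n-1}}\,\EE\left[e^{\lambda D_n} \mid \mathcal{F}_{n-1}\right]\right] \le e^{\lambda^2/8}\,\EE\left[e^{\lambda S_{n-1}}\right]. \]
Induction on $n$ yields $\EE[e^{\lambda S_n}] \le e^{n\lambda^2/8}$.

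\textbf{Step 3: Chernoff bound.} By Markov's inequality, for $\lambda > 0$,
\[ \Pr(S_n \ge nt) \le \exp\left(-\lambda n t + n\lambda^2/8\right). \]
Choosing $\lambda = 4t$ gives $\exp(-2nt^2)$. Applying the same argument to the sequence $-D_k$, which is also a martingale difference sequence in intervals of length one, bounds the lower tail by the same quantity. A union bound over the two tails yields the factor $2$.

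\textbf{Main obstacle.} The only delicate point is Step 1 in the conditional setting. The interval endpoint $a_k$ and the parameter $p$ are random, but they are $\mathcal{F}_{k-1}$-measurable, and the bound $\lambda^2/8$ holds uniformly in $p \in [0,1]$. This uniformity is exactly what lets the per-step bounds chain together in Step 2 despite the dependence across episodes.
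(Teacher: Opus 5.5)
Your proof is correct and follows the same route as the standard proof behind the cited Corollary~2.20 of Wainwright: a conditional Hoeffding lemma, chaining via the tower property, a Chernoff bound with $\lambda = 4t$, and a union bound over the two tails; the paper itself gives no argument beyond that citation. Your remark that the literal hypothesis $D_k \in [0,1]$ forces $D_k = 0$ is right, and your repair (an interval $[a_k, a_k+1]$ with $\mathcal{F}_{k-1}$-measurable $a_k$) is mildly more general than Wainwright's deterministic-endpoint version and is exactly what the paper needs when it applies the bound to $s[\ell] - \EE_{\ell-1}[s[\ell]]$ with exponent $-2\alpha$.
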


Finally, we introduce the following technical lemma to manipulate inequalities of the form $b \ge a \log b$, which shows up when we bound the episode length $\tau$. 
\begin{lemma}
\label{thm:nlogn-bound}
    For any positive real numbers $a, b$.
    If $b \ge 2a\log(2a)$, then we have $b \ge a \log b$.
\end{lemma}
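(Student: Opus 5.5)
The statement to prove is Lemma~\ref{thm:nlogn-bound}: for positive $a,b$, if $b \ge 2a\log(2a)$ then $b \ge a\log b$. I would prove it by studying the function $f(x) = x - a\log x$ on $(0,\infty)$. It is convex, its derivative is $1 - a/x$, and so it attains its minimum at $x=a$ and is increasing on $[a,\infty)$. The task is therefore to show $f(b)\ge 0$ whenever $b \ge 2a\log(2a)$.

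\textbf{Case 1: $f$ is nonnegative everywhere.} The minimum value is $f(a) = a(1-\log a)$. If $a \le e$, this is nonnegative. Then $b \ge a\log b$ holds for every $b>0$, with no hypothesis needed. (When $b<1$ the right side is negative, so the claim is trivial anyway.)

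\textbf{Case 2: $a > e$.} Here $2a\log(2a) > 2a > a$, so every admissible $b$ lies in the region where $f$ is increasing. By monotonicity it suffices to check the single point $b_0 = 2a\log(2a)$, i.e.\ to show
\[ 2a\log(2a) \ge a\log\bigl(2a\log(2a)\bigr). \]
Dividing by $a$ and writing $u = 2a$, this becomes $2\log u \ge \log u + \log\log u$, which is the same as $\log u \ge \log\log u$, i.e.\ $u \ge \log u$. That inequality holds for all $u>0$, since $\log u \le u-1 < u$. This completes the argument.

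The only delicate point is the domain of the monotonicity argument: we must confirm that the threshold $2a\log(2a)$ already exceeds the minimizer $a$. This is exactly why the case split at $a=e$ is needed; in the small-$a$ case, $2a\log(2a)$ may be at most $a$ or even negative, but there the global minimum is already nonnegative. Everything else is routine.
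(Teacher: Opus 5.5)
Your proof is correct and follows the paper's argument essentially step for step: the same case split at $a=e$, the global minimum $f(a)=a(1-\log a)\ge 0$ in the first case, and in the second case monotonicity of $x-a\log x$ on $[a,\infty)$ plus a check at the single point $b=2a\log(2a)$. The only differences are cosmetic: you reduce the endpoint check to $u\ge\log u$ with $u=2a$, where the paper uses $x-\log x\ge 1$ with $x=\log(2a)$, and your split $a\le e$ also covers $a=e$, which the paper's strict split ($a<e$, $a>e$) leaves implicit.
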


\begin{proof}
    We consider two cases on the value of $a$:
    \begin{itemize}
        \item Case 1: $a < e$. Then, we note that $\min_b (b - a\log b) = a - a \log a$, which is positive when $a < e$.
        \item Case 2: $a > e$. In this case, $2a \log(2a) > a$.
        So, for $b \ge 2a \log(2a)$, we have $\frac{\partial}{\partial b} b - a \log b = 1 - a/b > 0$.
        Hence it suffices to check that $b \ge a \log b$ when $b = 2a\log(2a)$.
        \begin{align*}
            2a \log(2a) - a \log(2a \log(2a))
            &= 2a \log(2a) - a \log(2a) - a \log\log(2a) \\
            &= a \log(2a) - a \log\log(2a) \\
            &\ge a \min_x (x - \log x) = a.
        \end{align*}
        So we are done.
    \end{itemize}
\end{proof} 
\section{Proof of Proposition~\ref{thm:instability}}
\label{sec:proof-instability}
In addition to proving Proposition~\ref{thm:instability}, we also present and analyze a variant of Criterion~\ref{alg:instability} that achieves a dimension-free guarantee.
In this variant, we consider unit vectors $u_1, \dots, u_N$ that are associated with the unstable modes of the destabilizing controllers.
The explicit form of these unit vectors will be defined later in the proof.

\begin{algcriteria}
\begin{algorithmic}[1]
    \State Input: an observed trajectory $y_1, \dots, y_\tau$ of length $\tau$, active controller $j$.
    \State Let $\mathcal{O}_{[j,j]}$ be the observability matrix of the model $(C_{[j,j]}, A_{[j,j]}, B_{[j,j]})$, compute
    \[\hat{x}_1 = \mathcal{O}_{[j,j]}^\dagger [y_\nu, \dots, y_1]^\top.\]
    \State Compute a predicted trajectory
    \[\hat{y}_{t} = C_{[j,j]} A_{[j,j]}^{t-1} \hat{x}_1.\]
    \ForAll{unit vectors $u_k, k = 1, \dots, N$}
    \If{$\norm{y_{\tau} - \hat{y}_{\tau}} > \sqrt{2\Theta \log(2n/\delta)}$} \Return 0
\EndIf
    \EndFor
    \State \Return 1
\end{algorithmic}
\caption{Instability detection criterion $S^{(1)}$ (dimension-free variant)}
\label{alg:instability-dim-free}
\end{algcriteria}
This variant achieves the same guarantee in Proposition~\ref{thm:instability}, but will a trajectory length of
\[\tau \gtrsim \nu + \log(1+\varepsilon_a)^{-1} \log\left(\frac{\Theta \log(2n/\delta)}{\varepsilon_a \varepsilon_c \sigma_w}\right).\]

To start with the proof, we write out the following input-output relation for the first $\nu$ steps:
\begin{equation}
\label{equ:obs-dynamics-app}
\begin{bmatrix} y_\nu \\ \vdots \\y_2 \\ y_1 \end{bmatrix}
=
\mathcal{O}_{[i_\star, j]} x_1
+
\tbo{\mathcal{T}^{(\nu-1)}_{[i_\star, j]}}{\mathbf{0}_{1 \times (\nu-1)}}
\begin{bmatrix} w_{\nu-1} \\ \vdots \\w_2 \\ w_1  \end{bmatrix}
+
\tbo{\mathcal{T}^{(\nu-1)}_{[i_\star, j]}}{\mathbf{0}_{1 \times (\nu-1)}}
\begin{bmatrix} B_{[i_\star, j]}u_{\nu-1} \\ \vdots \\ B_{[i_\star, j]}u_2 \\ B_{[i_\star, j]}u_1  \end{bmatrix}
+
\begin{bmatrix} \eta_\nu \\ \vdots \\ \eta_2 \\ \eta_1  \end{bmatrix},
\end{equation}
where we denote $\mathcal{O} = [CA^{\nu-1}; \dots; CA; C]$ as the observability matrix and
\[\mathcal{T}^{(k)} = \begin{bmatrix} 
    C & CA & CA^2 & \dots & CA^{k-1} \\
    0 & C & CA & \dots & CA^{k-2} \\
    0 & 0 & C & \dots & CA^{k-3}\\
    \vdots & \vdots & \vdots & \ddots & \vdots \\
    0 & 0 & 0 & \dots & C
\end{bmatrix}\]
as a $k \times k$ Toeplitz matrix.

\paragraph{Case 1: matching controller ($j = i_\star$)}
Since $\mathcal{O}_{[i_\star, j]} = \mathcal{O}_{[j,j]}$, we can multiply both sides of \eqref{equ:obs-dynamics-app} by the pseudo-inverse $\mathcal{O}_{[j,j]}$ to get
\begin{equation*}
\hat{x}_1 = \mathcal{O}_{[j, j]}^\dagger
\begin{bmatrix} y_\nu \\ \vdots \\y_2 \\ y_1 \end{bmatrix}
=
x_1
+
\mathcal{O}_{[j, j]}^\dagger
\tbo{\mathcal{T}^{(\nu-1)}_{[j, j]}}{\mathbf{0}_{1 \times (\nu-1)}}
\begin{bmatrix} w_{\nu-1} \\ \vdots \\w_2 \\ w_1  \end{bmatrix}
+
\mathcal{O}_{[j, j]}^\dagger
\tbo{\mathcal{T}^{(\nu-1)}_{[j, j]}}{\mathbf{0}_{1 \times (\nu-1)}}
\begin{bmatrix} B_{[j, j]}u_{\nu-1} \\ \vdots \\ B_{[j, j]}u_2 \\ B_{[j, j]}u_1  \end{bmatrix}
+
\mathcal{O}_{[i_\star, j]}^\dagger \begin{bmatrix} \eta_\nu \\ \vdots \\ \eta_2 \\ \eta_1  \end{bmatrix}.
\end{equation*}
Under Assumption~\ref{assum:observability}, the closed-loop system is $\nu$-strictly observable, and we have $\opnorm{\mathcal{O}_{[i_\star, j]}^\dagger} \le 1/\varepsilon_c$.
And since the matching controller is stabilizing, the Toeplitz matrices are bounded by $\hinfnorm{C_{[j,j]}, A_{[j,j]}, B_{[j,j]}}$.
It follows that $x_1 - \hat{x}_1$ is 
\begin{equation}
\label{equ:init-est-error}
    \varepsilon_c^{-1}\left(\hinfnorm{C_{[j,j]}, A_{[j,j]}}\sigma_w^2 + \hinfnorm{C_{[j,j]}, A_{[j,j]}, B_{[j,j]}}\sigma_u^2 + \sigma_\eta^2\right)
\end{equation}
sub-Gaussian.

Then, we recursively expand the system dynamics to have
\begin{align*}
    y_t
    &= C_{[i_\star, j]} A_{[i_\star, j]}^{t-1} x_{1} + \sum_{s=1}^{t-1} C_{[i_\star, j]} A_{[i_\star, j]}^{s-1} B_{[i_\star, j]} u_{t-s} + \sum_{s=1}^{t-1} C_{[i_\star, j]} A_{[i_\star, j]}^{s-1} w_{t-s} + \eta_t.
\end{align*}
which means
\begin{align*} 
    y_{\tau} - \hat{y}_{\tau} = C_{[i_\star, j]} A_{[i_\star, j]}^{\tau-1} (x_{1} - \hat{x}_1) + \sum_{s=1}^{\tau-1} C_{[i_\star, j]} A_{[i_\star, j]}^{s-1} B_{[i_\star, j]} u_{\tau-s} + \sum_{s=1}^{\tau-1} C_{[i_\star, j]} A_{[i_\star, j]}^{s-1} w_{\tau-s} + \eta_\tau.
\end{align*}
Therefore, the quantity $y_{\tau} - \hat{y}_{\tau}$ is 
\begin{align*} 
    &\hinfnorm{C_{[j,j]}, A_{[j,j]}} \varepsilon_c^{-1}\left(\hinfnorm{C_{[j,j]}, A_{[j,j]}}\sigma_w^2 + \hinfnorm{C_{[j,j]}, A_{[j,j]}, B_{[j,j]}}\sigma_u^2 + \sigma_\eta^2\right) \\
    &\quad\quad+ \hinfnorm{C_{[j,j]}, A_{[j,j]}}\sigma_w^2 + \hinfnorm{C_{[j,j]}, A_{[j,j]}, B_{[j,j]}}\sigma_u^2 + \sigma_\eta^2
\end{align*}
sub-Gaussian.
If we let 
\[\zeta = \max \left\{\hinfnorm{C_{[i, i]}, A_{[i, i]}}, \hinfnorm{C_{[i, i]}, A_{[i, i]}, B_{[i, i]}} : 1 \le i \le N\right\},\]
we can say that $y_{\tau} - \hat{y}_{\tau}$ is
$\Theta:= (1 + \zeta \varepsilon_c^{-1})(\zeta \sigma_w^2 + \zeta \sigma_u^2 + \sigma_\eta^2)$
sub-Gaussian.
By the property of sub-Gaussian (see Equ. 2.9 in~\cite{wainwright2019high}), we have that for any unit vector $u$ and $\lambda > 0$, 
\begin{equation}
\label{equ:subgaussian-conc}
    \Pr(|u^\top(y_{\tau} - \hat{y}_{\tau})| \ge \lambda) \le 2 \exp(-\lambda^2/(2\Theta)).
\end{equation}

\paragraph{Case 2: active controller $j$ is destabilizing}
Like in the previous case, we have that
\begin{align*} 
    y_{\tau} - \hat{y}_{\tau} = C_{[i_\star, j]} A_{[i_\star, j]}^{\tau-1} (x_{1} - \hat{x}_1) + \sum_{s=1}^{t-1} C_{[i_\star, j]} A_{[i_\star, j]}^{s-1} B_{[i_\star, j]} u_{\tau-s} + \sum_{s=1}^{t-1} C_{[i_\star, j]} A_{[i_\star, j]}^{s-1} w_{\tau-s} + \eta_\tau.
\end{align*}
Because $\hat{x}_1$ is independent of the noise at time $\nu+1$, we can express the the difference as
\[\Delta := y_{\tau} - \hat{y}_{\tau} = C_{[i_\star, j]} A_{[i_\star, j]}^{\tau-\nu-2} w_{\nu+1} + \xi,\]
where $\xi$ is a quantity independent of $w_{\nu+1}$.

Under Assumption~\ref{assum:explosive}, the closed-loop dynamics $A_{[i_\star, j]}$ has spectral radius $\ge 1+\varepsilon_a$.
So, we let unit vector $q_{i_\star}$ be an eigenvector of $A_{[i_\star, j]}$ corresponding to an eigenvector $\lambda_{[i_\star,j]}$ whose magnitude $\ge 1+\varepsilon_a$.
Then, due to strict observability, we have that
\begin{align*}
    \varepsilon_c \le \norm{\mathcal{O}_{[i_\star, j]} q_{i_\star}}
    &= \norm{\left[\lambda_{[i_\star,j]}^{\nu-1} C_{[i_\star, j]} q_{i_\star}; \dots; \lambda_{[i_\star,j]} C_{[i_\star, j]} q_{i_\star}; C_{[i_\star, j]} q_{i_\star}\right]} \\
    &= \norm{C_{[i_\star, j]}q_{i_\star}} \frac{|\lambda_{[i_\star,j]}|^\nu - 1}{|\lambda_{[i_\star,j]}| - 1} \\
    \implies
    \norm{C_{[i_\star, j]}q_{i_\star}}
    &\ge \frac{\varepsilon_c \varepsilon_a}{|\lambda_{[i_\star,j]}|^\nu}.
\end{align*}
It follows that \[\norm{C_{[i_\star, j]} A_{[i_\star, j]}^{\tau-\nu-2} q_{i_\star}} \ge |\lambda_{[i_\star,j]}|^{\tau-\nu-2}\frac{\varepsilon_c \varepsilon_a}{|\lambda_{[i_\star,j]}|^\nu} \ge \varepsilon_c \varepsilon_a (1+\varepsilon_a)^{\tau-2\nu-2}. \]

Now, we can let $(u_{i_\star}, v_{i_\star})$ be the leading singular vectors of $C_{[i_\star, j]} A_{[i_\star, j]}^{\tau-\nu-2}$ corresponding to the top singular value $\ge \varepsilon_c \varepsilon_a (1+\varepsilon_a)^{\tau-2\nu-2}$.
Since $w_{\nu+1}$ is Gaussian, we know that from standard normal CDF, $\Pr(|v_{i_\star}^\top w_{\nu+1}| > \sigma_w/4) > 4/5$.
Then we have that
\begin{align*}
    \norm{\Delta} \ge |u_{i_\star}^\top \Delta|
    &\ge |u_{i_\star}^\top C_{[i_\star, j]} A_{[i_\star, j]}^{\tau-\nu-2} w_{\nu+1}| \\
    &= \opnorm{C_{[i_\star, j]} A_{[i_\star, j]}^{\tau-\nu-2}} |v_{i_\star}^\top w_{\nu+1}| \\
    &\ge \varepsilon_c \varepsilon_a (1+\varepsilon_a)^{\tau-2\nu-2} \cdot |v_{i_\star}^\top w_{\nu+1}| \\
    &\ge \frac{1}{4}\varepsilon_c \varepsilon_a (1+\varepsilon_a)^{\tau-2\nu-2} \sigma_w \quad \text{w.p. } 2/5.
\end{align*}
Note the probability is halved because by symmetry, there is 50/50 chance that the signs of $u_{i_\star}^\top\xi$ and $u_{i_\star}^\top C_{[i_\star, j]} A_{[i_\star, j]}^{\tau-\nu-2} w_{\nu+1}$ are the same.

We note that for the current active controller $j$ we can pre-compute $(u_{i}, v_{i})$ for $1 \le i \le N$ that makes $A_{[i,j]}$ is unstable (and set the stable ones to 0) by applying an iterative SVD algorithm\footnote{This is typically a modified Lanczos algorithm, such as MATLAB's \texttt{svds} function.} on the matrix $C_{[i,j]} (A_{[i,j]} / \lambda_{[i,j]})^{\tau-\nu-2}$.

Then, we use these $u_i$'s for the threshold in Criterion~\ref{alg:instability-dim-free}, so that for at least one of $1 \le i \le N$, we have $|u_i^\top \Delta| \ge \frac{1}{4}\varepsilon_c \varepsilon_a (1+\varepsilon_a)^{\tau-2\nu-2} \sigma_w \text{ w.p. } 2/5$.

With the analysis for both cases, we can now complete the proof for both variants.

\paragraph{Variant 1 (Criterion~\ref{alg:instability}).}
Note that we can bound $\norm{\Delta}$ by considering the basis vector $e_1, \dots, e_{d_y}$ in $\RR^{d_y}$ and measure $|e_k^\top \Delta|$ for all $k$'s.
Then, after applying union bound to \eqref{equ:subgaussian-conc}, we have that 
\[\Pr\left(|e_k^\top(y_{\tau} - \hat{y}_{\tau})| \le \sqrt{2\Theta\log(2d_y/\delta)} \quad  \forall k\right) \ge 1 - \delta.\]
It follows that
\[\Pr\left(\norm{y_{\tau} - \hat{y}_{\tau}} \le \sqrt{2d_y\Theta\log(2d_y/\delta)}\right) \ge 1 - \delta.\]
So, this criterion correctly finds the matching controller to be stabilizing with probability $1-\delta$.
Next, if we set
\[\tau = 2 + 2 \nu + \log(1+\varepsilon_a)^{-1} \log\left(\frac{4\sqrt{2d_y\Theta\log(2d_y/\delta)}}{\varepsilon_a \varepsilon_c \sigma_w}\right),\]
an unstable $A_{[i_\star, j]}$ results in $\norm{y_{\tau} - \hat{y}_{\tau}} \le \sqrt{2d_y\Theta\log(2d_y/\delta)}$ with probability $\ge 2/5$.
So, this criteria correctly detect an unstable $A_{[i_\star, j]}$ with probability at least 2/5.

\paragraph{Variant 2 (Criterion~\ref{alg:instability-dim-free}).}
After applying union bound to \eqref{equ:subgaussian-conc}, we have that 
\[\Pr\left(|u_k^\top(y_{\tau} - \hat{y}_{\tau})| \le \sqrt{2\Theta\log(2n/\delta)} \quad  \forall k\right) \ge 1 - \delta.\]
So, this criterion correctly finds the matching controller to be stabilizing with probability $1-\delta$.
Next, if we set
\[\tau = 2 + 2 \nu + \log(1+\varepsilon_a)^{-1} \log\left(\frac{4\sqrt{2\Theta\log(2n/\delta)}}{\varepsilon_a \varepsilon_c \sigma_w}\right),\]
an unstable $A_{[i_\star, j]}$ results in $u_{i_\star}^\top(y_{\tau} - \hat{y}_{\tau})| \le \sqrt{2d_y\Theta\log(2d_y/\delta)}$ with probability $\ge 2/5$.
So, this criteria correctly detect an unstable $A_{[i_\star, j]}$ with probability at least 2/5. 
\section{Proof of Proposition~\ref{thm:sys-id}}
\label{sec:proof-sysid}
Recall that, in Criterion~\ref{alg:sys-id}, we defined 
\[\widetilde{y}_{t} = y_t - C_{[j,j]} A_{[j,j]}^{t-1} \hat{x}_1 - G_{[j,j]} z_t = (G_{[i_\star, j]} - G_{[j, j]}) z_t + r_t.\]
Assumption~\ref{assum:markov-sep} implies that, for every $k \neq j$, there exist \textit{critical directions} $(u_k, v_k)$ satisfying $u_k^\top (G_{[k, j]} - G_{[j, j]}) v_k = \opnorm{G_{[k, j]} - G_{[j, j]}} \ge \gamma$.
Next, we estimate the difference $G_{[i_\star, j]} - G_{[j, j]}$ along these critical directions by performing the OLS estimate from data points $\left\{(v_k^\top z_t, u_k^\top \widetilde{y}_t)\right\}_{t=\nu+h+1}^\tau$.
In particular, we define the OLS estimate along a direction $(u, v)$ as
\begin{align*}
    \Delta_{u, v}
    &= \left(\sum_{t=\nu+h+1}^{\tau} (u^\top \widetilde{y}_t) (v^\top z_t)\right) \left(\sum_{t=\nu+h+1}^{\tau} (v^\top z_t)^2\right)^{-1} \\
    &= \left(\sum_{t=\nu+h+1}^{\tau} u^\top ((G_{[i_\star, j]} - G_{[j, j]})z_t + r_t) (v^\top z_t)\right) \left(\sum_{t=\nu+h+1}^{\tau} (v^\top z_t)^2\right)^{-1}.
\end{align*}
Then, this criteria considers the active controller's associated model as matching the true system if $\Delta_{u_k, v_k}$ is small along every critical direction.
And this criteria declares non-matching if any one of the $\Delta_{u_k, v_k}$ is large.

\paragraph{Case 1: matching controller ($j = i_\star$).}
It follows that for each critical direction, we have
\begin{align*}
    \Delta_{u_j, v_j}
    &= \left(\sum_{t=\nu+h+1}^{\tau} (u^\top \widetilde{y}_t) (v^\top z_t)\right) \left(\sum_{t=\nu+h+1}^{\tau} (v^\top z_t)^2\right)^{-1} \\
    &= \left(\sum_{t=\nu+h+1}^{\tau}  (u_j r_t) (v_j^\top z_t)\right) \left(\sum_{t=\nu+h+1}^{\tau} (v^\top z_t)^2\right)^{-1}.
\end{align*}
Therefore, it suffices to show that for arbitrary unit vectors $(u, v)$, the quantity 
\begin{equation}
\label{equ:ols-uv}
    \Delta_{u, v} = \left(\sum_{t=\nu+h+1}^{\tau} (u^\top r_t)(v^\top z_t)\right) \underbrace{\left(\sum_{t=\nu+h+1}^{\tau} (v^\top z_t)^2 \right)^{-1}}_{:= \Phi^{-1}}
\end{equation}
satisfies $|\Delta_{u,v}| \le \gamma$ with probability $1-\delta/N$.
Then, from union bound, Criterion~\ref{alg:sys-id} correctly finds controller $j$ to match the true system with probability $1-\delta$.

To begin, we first establish that the residual term $r_t$ is sub-Gaussian.
In Section~\ref{sec:sys-id}, we derived that
\begin{align*}
    y_t
    &= C_{[i_\star, j]} A_{[i_\star, j]}^{h} x_{t-h} +  G_{[i_\star, j]} z_t +  \sum_{s=1}^h C_{[i_\star, j]} A_{[i_\star, j]}^{s} w_{t-s-1} +  \eta_t.
\end{align*}
And since
\[x_{t-h} = A_{[i_\star, j]}^{t-h-1} x_{1} + \sum_{s=1}^{t-h-1} A_{[i_\star, j]}^{s-1} B_{[i_\star, j]} u_{t-h-s} + \sum_{s=1}^{t-h-1} A_{[i_\star, j]}^{s-1} w_{t-h-s},\]
we can write
\begin{align*}
    y_t
    &= C_{[i_\star, j]} A_{[i_\star, j]}^{t-1} x_{1} + G_{[i_\star, j]} z_t + \sum_{s=1}^{t-h-1} C_{[i_\star, j]} A_{[i_\star, j]}^{s+h-1} B_{[i_\star, j]} u_{t-h-s} + \sum_{s=1}^{t-1} C_{[i_\star, j]} A_{[i_\star, j]}^{s-1} w_{t-s} + \eta_t.
\end{align*}
Therefore, 
\begin{align*}
    r_t 
    &= y_t - C_{[i_\star,j]}A_{[i_\star,j]}^{t-1} \hat{x}_1 - G_{[i_\star,j]} z_t \\
    &= C_{[i_\star, j]} A_{[i_\star, j]}^{t-1} (x_{1} - \hat{x}_1) + \sum_{s=1}^{t-h-1} C_{[i_\star, j]} A_{[i_\star, j]}^{s+h-1} B_{[i_\star, j]} u_{t-h-s} + \sum_{s=1}^{t-1} C_{[i_\star, j]} A_{[i_\star, j]}^{s-1} w_{t-s} + \eta_t.
\end{align*}
Recall that, from \eqref{equ:init-est-error} in the previous part, the estimation error on the initial state is $x_1 - \hat{x}_1$ is 
\[ \varepsilon_c^{-1}(\hinfnorm{C_{[i_\star, i_\star]}, A_{[i_\star, i_\star]}}\sigma_w^2 + \hinfnorm{C_{[i_\star, i_\star]}, A_{[i_\star, i_\star]}, B_{[i_\star, i_\star]}}\sigma_u^2 + \sigma_\eta^2)\]
sub-Gaussian.
We conclude that $r_t$ is
\begin{align*}
    & \hinfnorm{C_{[i_\star, i_\star]}, A_{[i_\star, i_\star]}} \varepsilon_c^{-1}(\hinfnorm{C_{[i_\star, i_\star]}, A_{[i_\star, i_\star]}}\sigma_w^2 + \hinfnorm{C_{[i_\star, i_\star]}, A_{[i_\star, i_\star]}, B_{[i_\star, i_\star]}}\sigma_u^2 + \sigma_\eta^2) \\
    &\quad\quad+ \hinfnorm{C_{[i_\star, i_\star]}, A_{[i_\star, i_\star]}} \sigma_w^2 + \hinfnorm{C_{[i_\star, i_\star]}, A_{[i_\star, i_\star]}, B_{[i_\star, i_\star]}} \sigma_u^2 + \sigma_\eta^2
\end{align*}
sub-Gaussian.
If we let 
\[\zeta = \max \left\{\hinfnorm{C_{[i, i]}, A_{[i, i]}}, \hinfnorm{C_{[i, i]}, A_{[i, i]}, B_{[i, i]}} : 1 \le i \le N\right\},\]
we can say that $r_t$ is
$\sigma_r^2 := (1 + \zeta \varepsilon_c^{-1})(\zeta \sigma_w^2 + \zeta \sigma_u^2 + \sigma_\eta^2)$
sub-Gaussian.
Additionally, we note that this calculations implies that $z_t$ and $r_t$ are independent.

Then, applying Proposition~\ref{thm:self-normalized} to the OLS estimate \eqref{equ:ols-uv} yields that for $\mu > 0$, and probability $1-\delta/(4N)$,
\begin{equation}
\label{equ:self-normalized-delta}
    \frac{\Phi^2 \Delta_{u, v}^2}{\Phi + \mu} = \frac{1}{\Phi + \mu} \left(\sum_{t=\nu+h+1}^{\tau} (u^\top r_t)(v^\top z_t)\right)^2 \le 2 \sigma_r^2 \log\left(\frac{4N\sqrt{\Phi + \mu}}{\delta\sqrt{\mu}}\right).
\end{equation}

To bound $\Delta_{u, v}$, we need to find both upper and lower bound for $\Phi$.
Also, for convenience, define $\tau' = \tau - \nu - h$.
For the upper bound, we note that $\Phi$ can be written as a quadratic form $g^T M g$, where $g \in \RR^{\tau d}$ and indexed so that $g_{i,j} = u_i[j]$.
We note that we can write as
\[\sum_{t=\nu+h+1}^\tau (v^\top z_t)^2 = \sum_{t=\nu+h+1}^\tau z_t^\top v v^\top z_t = \sum_{t=\nu+h+1}^\tau g^\top M_t g,\]
where $M_t$ has a submatrix equal to $vv^\top$ and rest of the entries are zero.

Then, by Hanson-Wright inequality (Proposition~\ref{thm:hanson-wright}), we have that for $\lambda \in (0, 1)$,
\begin{align*}
    \Pr\left(\frac{1}{\sigma_u^2} g^\top M g > \tr(M) + 2 \norm{M}_F \sqrt{\lambda} + 2\opnorm{M} \lambda \right) < \Pr\left(\frac{1}{\sigma_u^2} g^\top M g > \tr(M) + 4 \norm{M}_F \sqrt{\lambda}\right) &< \exp(-\lambda), \\
    \Pr\left(\frac{1}{\sigma_u^2} g^\top M g < \tr(M) - 2\norm{M}_F \sqrt{\lambda}\right) &< \exp(-\lambda)
\end{align*}
For the trace, we note that $\tr(M) = \sum_t \tr(M_t) = \tau'$.
And the Frobenius norm satisfies $\norm{M}_F  \le \sum_t \norm{M_t}_F = \tau'.$
Therefore
\begin{align*} 
\Pr\left(\Phi \ge \sigma_u^2 (\tau' + 4 \sqrt{\tau' \lambda})\right) \le \exp(-\lambda), \\
\Pr\left(\Phi < \sigma_u^2 (\tau' - 2 \sqrt{\tau' \lambda})\right) \le \exp(-\lambda).
\end{align*}
Setting $\lambda = \log(4N/\delta)$ in the first line, we have
\begin{equation}
\label{equ:phi-upper-bound}
    \Pr\left(\Phi \ge \sigma_u^2 (3\tau' + 2 \log(4N/\delta))\right) \le \Pr\left(\Phi \ge \sigma_u^2 (\tau' + 4 \sqrt{\tau'\log(4N/\delta)})\right) \le \delta/(4N).
\end{equation}
Setting $\lambda = \tau'/16$ in the second line, we have
\begin{equation}
\label{equ:phi-lower-bound}
    \Pr\left(\Phi < \frac{\sigma_u^2 \tau'}{2}\right) \le \exp\left(-\frac{\tau'}{16}\right).
\end{equation}

With these bounds in mind, we claim that for any $\mu > 0$,
\begin{equation}
\label{equ:tau-three-quantities}
    \tau' = \max\left\{16\log(4N/\delta), \frac{2\mu}{\sigma_u^2}, \frac{8\sigma_r^2}{\sigma_u^2 \gamma^2}\log\left(\frac{800 N^2 \sigma_r^2}{\delta^2\gamma^2\mu}\right)\right\}
\end{equation}
ensures that $\Delta_{u,v} \le \gamma$ with probability $1-\delta/N$.

First, applying the first two quantities to \eqref{equ:phi-lower-bound} yields that $\Phi \ge \mu$ with probability $1-\delta/(4N)$.
Thus, we have that
\begin{equation}
\label{equ:error-upper-bound}
\begin{aligned}
    \Delta_{u, v}^2 
    &\le \frac{\Phi+\mu}{\Phi^2} \cdot 2\sigma_r^2 \log\left(\frac{4N\sqrt{\Phi + \mu}}{\delta\sqrt{\mu}}\right) \\
    &\le \frac{2}{\Phi} \sigma_r^2 \log\left(\frac{32 N^2\Phi}{\delta^2\mu}\right) \\
    &\le \frac{2}{\Phi} \sigma_r^2 \log\left(\frac{32 N^2 \sigma_u^2 (3\tau' + 2 \log(4N/\delta))}{\delta^2\mu}\right) \\
    &\le \frac{2}{\Phi} \sigma_r^2 \log\left(\frac{100 N^2 \sigma_u^2 \tau'}{\delta^2\mu}\right).
\end{aligned}
\end{equation}
with probability $1-3\delta/(4N)$, where the first line follows from \eqref{equ:self-normalized-delta}, the third line follows from \eqref{equ:phi-upper-bound}, and the final line follows from the first quantity of \eqref{equ:tau-three-quantities}.

Also, from Lemma~\ref{thm:nlogn-bound}, the third quantity of \eqref{equ:tau-three-quantities} implies
\begin{align*}
    & \frac{100 N^2 \sigma_u^2}{\delta^2\mu} \tau' \ge \frac{100 N^2 \sigma_u^2}{\delta^2\mu} \cdot \frac{8\sigma_r^2}{\sigma_u^2 \gamma^2} \log\left(\frac{100 N^2 \sigma_u^2}{\delta^2\mu} \cdot \frac{8\sigma_r^2}{\sigma_u^2 \gamma^2}\right) \\
    \Rightarrow{}& \frac{100 N^2 \sigma_u^2}{\delta^2\mu} \tau' \ge \frac{100 N^2 \sigma_u^2}{\delta^2\mu} \cdot \frac{4\sigma_r^2}{\sigma_u^2 \gamma^2}  \log\left(\frac{100 N^2 \sigma_u^2 \tau'}{\delta^2\mu}\right) \\
    \Leftrightarrow{}& \tau' \ge \frac{4\sigma_r^2}{\sigma_u^2 \gamma^2}  \log\left(\frac{100 N^2 \sigma_u^2 \tau'}{\delta^2\mu}\right).
\end{align*}
Then, applying the first quantity of \eqref{equ:tau-three-quantities} to \eqref{equ:phi-lower-bound}, we have that
\begin{equation}
\label{equ:persistency-excitation}
\Phi \ge \frac{\sigma_u^2\tau'}{2} \ge \frac{2}{\gamma^2} \sigma_r^2 \log\left(\frac{100 N^2 \sigma_u^2 \tau'}{\delta^2\mu}\right)
\end{equation}
with probability $1 - \delta/(4N)$.
Therefore, by combining \eqref{equ:error-upper-bound} and \eqref{equ:persistency-excitation}, we conclude that $|\Delta_{u, v}| \le \gamma$ with probability $1-\delta/N$.

Finally, by picking $\mu = \sigma_r^2 / \gamma^2$ and union bounding over all $N$ critical directions, we have that when
\[\tau = \nu + h + \max\left\{16\log(4N/\delta), \frac{2\sigma_r^2}{\sigma_u^2\gamma^2}, \frac{8\sigma_r^2}{\sigma_u^2 \gamma^2}\log\left(\frac{800 N^2}{\delta^2}\right)\right\} \asymp \frac{\sigma_r^2}{\sigma_u^2 \gamma^2} \log(N / \delta),\]
the quantity $\Delta_{u_k, v_k}$ is small along all critical directions.
So, Criterion~\ref{alg:sys-id} correctly outputs 1 with probability $1-\delta$.

\paragraph{Case 2: non-matching controller $j \neq i_\star$.}

Per the definition of the critical directions, we have that $u_{i_\star}^\top(G_{[i_\star, j]} - G_{[j, j]})v_{i_\star} = \opnorm{G_{[i_\star, j]} - G_{[j, j]}} \ge \gamma$.
Then, we have
\begin{align*}
    \Delta_{u_{i_\star}, v_{i_\star}}
    &= \left(\sum_{t=\nu+h+1}^{\tau} u_{i_\star}^\top ((G_{[i_\star, j]} - G_{[j, j]})z_t (v_{i_\star}^\top z_t) + (u_{i_\star}^\top r_t)(v_{i_\star}^\top z_t)\right) \left(\sum_{t=\nu+h+1}^{\tau} (v^\top z_t)^2 \right)^{-1} \\
    &= \left(\sum_{t=\nu+h+1}^{\tau} \opnorm{G_{[i_\star, j]} - G_{[j, j]}}v_{i_\star}^\top z_t (v_{i_\star}^\top z_t) + (u_{i_\star}^\top r_t)(v_{i_\star}^\top z_t)\right) \left(\sum_{t=\nu+h+1}^{\tau} (v^\top z_t)^2 \right)^{-1} \\
    &= \opnorm{G_{[i_\star, j]} - G_{[j, j]}} \left(\sum_{t=\nu+h+1}^{\tau} (v_{i_\star}^\top z_t)^2\right) \left(\sum_{t=\nu+h+1}^{\tau} (v_{i_\star}^\top z_t)^2 \right)^{-1} \\
    &\hspace{8em}+ \left(\sum_{t=\nu+h+1}^{\tau} (u_{i_\star}^\top r_t)(v_{i_\star}^\top z_t)\right) \left(\sum_{t=\nu+h+1}^{\tau} (v_{i_\star}^\top z_t)^2 \right)^{-1} \\
    &= \opnorm{G_{[i_\star, j]} - G_{[j, j]}} + \left(\sum_{t=\nu+h+1}^{\tau} (u_{i_\star}^\top r_t)(v_{i_\star}^\top z_t)\right) \left(\sum_{t=\nu+h+1}^{\tau} (v_{i_\star}^\top z_t)^2 \right)^{-1} \\
    &\ge \gamma + \left(\sum_{t=\nu+h+1}^{\tau} (u_{i_\star}^\top r_t)(v_{i_\star}^\top z_t)\right) \left(\sum_{t=\nu+h+1}^{\tau} (v_{i_\star}^\top z_t)^2 \right)^{-1} \\
    &\ge \gamma \quad \text{w.p. } 1/2,
\end{align*}
where the second line follows from the fact that $(u_{i_\star}, v_{i_\star})$ are the leading singular vectors of $G_{[i_\star, j]} - G_{[j, j]}$, and the last inequality holds because the second term is odd in $z_t$'s, so it is positive with probability 1/2.
Therefore, Criterion~\ref{alg:sys-id} correctly finds controller $j$'s associated model not matching the true system with probability at least 1/2.

\section{Proof of Theorem~\ref{thm:ucb-unified}}
\label{sec:proof-ucb}
We shall prove the two parts of this theorem in order.

\subsection*{Part 1: Sample complexity}
Given Proposition~\ref{thm:instability} and~\ref{thm:sys-id}, we can instantiate the following guarantees for our evaluation metric:
\begin{corollary}
\label{thm:combined-metric}
    Given that each episode has sufficiently length of $\tau = \max(T_1, T_2)$ steps,
    the metric $S(y_1, \dots, y_\tau; j) = \lfloor \frac{1}{2} (S^{(1)}(y_1, \dots, y_\tau; j) + S^{(2)}(y_1, \dots, y_\tau; j)) \rfloor$ satisfies:
    \begin{itemize}[wide, labelindent=3pt]
    \item If the active controller matches the true system ($i_\star = j$), then $S = 1$ with probability $\ge 14/15$.
    \item If the active controller destabilizes the true system, then $S = 0$ with probability $\ge 2/5$.
    \item If the active controller is not matching ($i_\star \neq j$) but stabilizing, then $S = 0$ with probability $\ge 1/2$.
    \end{itemize}
\end{corollary}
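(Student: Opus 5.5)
The plan is to combine Propositions~\ref{thm:instability} and~\ref{thm:sys-id} through the structure of the floor operation. Since $S^{(1)}, S^{(2)} \in \{0,1\}$, the score $S = \lfloor \frac12 (S^{(1)} + S^{(2)}) \rfloor$ is the logical AND of the two criteria: $S = 1$ exactly when both equal $1$, and $S = 0$ as soon as either equals $0$. Each bullet then reduces to a statement about one criterion, or to a union bound over both. Throughout, $\tau \ge \max(T_1, T_2)$, so both propositions apply to the same trajectory. I also fix the confidence levels $\delta_1 = \delta_2 = 1/30$, with $T_1, T_2$ instantiated accordingly.

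For the matching case $j = i_\star$, the matching controller is stabilizing, so both propositions apply. Proposition~\ref{thm:instability} gives $\Pr(S^{(1)} = 0) \le \delta_1$ and Proposition~\ref{thm:sys-id} gives $\Pr(S^{(2)} = 0) \le \delta_2$. A union bound yields
\[ \Pr(S = 1) \ge 1 - \delta_1 - \delta_2 = 14/15. \]
No independence between the two criteria is needed here, which matters because both use the same $\hat{x}_1$ and the same outputs.

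For the destabilizing case, $S^{(1)} = 0$ already forces $S = 0$. Hence $\Pr(S = 0) \ge \Pr(S^{(1)} = 0) \ge 2/5$ by the first bullet of Proposition~\ref{thm:instability}, with no assumption on $S^{(2)}$. For the non-matching but stabilizing case, the hypothesis of Proposition~\ref{thm:sys-id} holds, so $\Pr(S = 0) \ge \Pr(S^{(2)} = 0) \ge 1/2$.

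The main subtlety is conditioning on the initial state $x_1$ of the episode, which is inherited from previous episodes and may be arbitrarily large. These per-episode probabilities will be used conditionally on the past in the martingale argument of Theorem~\ref{thm:ucb-unified}. So I would check that the two propositions hold conditionally on $x_1$, i.e.\ on $\mathcal{F}$ at the start of the episode. This holds because both proofs cancel $x_1$ exactly through $\mathcal{O}^\dagger$, or treat it through sign symmetry and independence of the fresh noise $w_{\nu+1}$ and $z_t$. The bounds are therefore uniform over $x_1$, and the corollary holds conditionally on the history.
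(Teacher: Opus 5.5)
Your proposal is correct and matches the paper's argument. The paper's proof is the single observation that Propositions~\ref{thm:instability} and~\ref{thm:sys-id} are instantiated with $\delta_1 = \delta_2 = 1/30$, and your union bound and your reading of $S$ as the AND of $S^{(1)}$ and $S^{(2)}$ fill in that instantiation; your remark that the bounds hold conditionally on the episode's initial state is what the paper uses implicitly when it writes $\EE_{q-1}[s_{i_\star}[q]] \ge 14/15$ in the Azuma--Hoeffding step.
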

Note that these bounds are instantiated by setting $\delta_1=\delta_2=1/30$ for both Propositions.

Recall that for any $1 \le i \le N$ and $q > 0$, we defined the average score to be:
\[\bar{s}_i[q] = \frac{1}{q} \sum_{\ell=1}^q s_i[\ell].\]
We first utilize Martingale concentration inequalities to show that
\begin{itemize}[wide, labelindent=3pt]
    \item If $i = i_\star$, then 
    \[ \Pr\left(\bar{s}_i[q] \le \frac{14}{15} - \sqrt{\frac{\alpha}{q}}\right) \le \exp(-2 \alpha).\]
    \item If $i \neq i_\star$, then 
    \[ \Pr\left(\bar{s}_i[q] \ge \frac{3}{5} + \sqrt{\frac{\alpha}{q}}\right) \le \exp(-2 \alpha).\]
\end{itemize}

To show this claim, we consider each of the three cases in Corollary~\ref{thm:combined-metric} separately.
First, with the matching controller ($i_\star = j$), we denote $\EE_{q-1}[s_{i_\star}[q]]$ as the random variable conditioned on the initial state of the episode where controller $i_\star$ is being applied for the $q$-th time.
Due to Corollary~\ref{thm:combined-metric}, we have that $\EE_{q-1}[s_{i_\star}[q]] \ge 14/15.$
Then, by Azuma-Hoeffding inequality (Proposition~\ref{thm:azuma-hoeffding}), we can bound
\[\Pr\left(\bar{s}_{i_\star}[q] \le \frac{14}{15} - \sqrt{\frac{\alpha}{q}}\right) \le \Pr\left(\frac{1}{q} \sum_{\ell=1}^q \left(s_{i_\star}[\ell] - \EE_{\ell-1}\left[s_{i_\star}[\ell]\right]\right) \le - \sqrt{\frac{\alpha}{q}}\right) \le \exp\left(-2q \left(\sqrt{\alpha/q}\right)^2\right) = \exp(-2\alpha).\]
Similarly, we have that when the active controller $j$ is destabilizing the underlying plant, we have that $\EE_{q-1}[s_j[q]] \le 3/5$.
Then, by Azuma-Hoeffding inequality, we have
\[\Pr\left(\bar{s}_{j}[q] \ge \frac{3}{5} + \sqrt{\frac{\alpha}{q}}\right) \le \Pr\left(\frac{1}{q} \sum_{\ell=1}^q \left(s_j[\ell] - \EE_{\ell-1}\left[s_j[\ell]\right]\right) \ge \sqrt{\frac{\alpha}{q}}\right) \le \exp(-2\alpha).\]
Finally, by the same logic, we have that the remaining controllers that are non-matching ($i_\star \neq j$) but stabilizing satisfies
\[\Pr\left(\bar{s}_{j}[q] \ge \frac{3}{5} + \sqrt{\frac{\alpha}{q}}\right) \le \exp(-2\alpha).\]

Next, having established concentration of the average scores, we consider the two algorithmic variants separately.

\paragraph{Variant 1:} $a_\ell = \frac{L}{72N}$.

We consider the event that
\[\mathcal{E} = \left\{\left(\bar{s}_{i_\star}[q] \ge \frac{14}{15} - \sqrt{\frac{a_\ell}{q}}\right) \wedge \left(\bar{s}_i[q] \le \frac{3}{5} + \sqrt{\frac{a_\ell}{q}}\right) \;\; \forall i \neq i_\star, 1 \le q \le L \right\}.\]
Under this event $\mathcal{E}$, we notice that 
\[\bar{s}_{i_\star}[q] + \sqrt{\frac{a_\ell}{q}} \ge \frac{14}{15} \;\; \forall q,\]
and for any $i \neq i_\star$, if $q > 36 a_\ell$, then
\[\bar{s}_{i}[q] + \sqrt{\frac{a_\ell}{q}} \le \frac{3}{5} + 2 \sqrt{\frac{a_\ell}{q}} < \frac{3}{5} + \frac{1}{3} = \frac{14}{15} \;\; \forall q > 36 a_\ell.\]
Due to the selection rule of Algorithm~\ref{alg:meta}, we conclude that any of the non-matching controller can only been selected at most $36 a_\ell = \frac{L}{2N}$ times.
Therefore, under event $\mathcal{E}$, the matching controller is the most frequently selected one and the algorithm succeeds in identification (i.e. $\hat{i} = i_\star$).

It remains to find sufficiently large number of episodes $L$ so that the event $\mathcal{E}$ occurs with probability $1-\delta$.
From our calculations above, the probability of this event occurring is at least
\begin{align*}
    \Pr(\mathcal{E})
    &\ge 1 - LN \exp(-2 a_\ell) = 1 - LN \exp(-L / 36N).
\end{align*}
Then, according to Lemma~\ref{thm:nlogn-bound}, we have that
\begin{align*}
    &L \ge 72N \log(72N^2/\delta) \\
    \Leftrightarrow{}& \frac{LN}{\delta} \ge \frac{72N^2}{\delta} \log(72N^2/\delta) \\
    \Rightarrow{}& \frac{LN}{\delta} \ge \frac{36N^2}{\delta} \log(LN/\delta) \\
    \Leftrightarrow{}& \frac{L}{36N} \ge \log(LN/\delta) \\
    \Leftrightarrow{}& LN \exp(-L/36N) \le \delta.
\end{align*}
Therefore, with $a_\ell = \frac{L}{72N}$ and $L = 72N \log(72N^2/\delta) \in O(N \log(N/\delta))$, Algorithm~\ref{alg:meta} identifies the matching controller with probability $1-\delta$.

\paragraph{Variant 2:} $a_\ell = \frac{1}{2} \log(\frac{\pi^2 N \ell^2}{6\delta})$.
We consider the event that
\[\mathcal{E} = \left\{\left(\bar{s}_{i_\star}[Q_{i_\star}(\ell)] \ge \frac{14}{15} - \sqrt{\frac{a_\ell}{Q_{i_\star}(\ell)}}\right) \wedge \left(\bar{s}_i[Q_i(\ell)] \le \frac{3}{5} + \sqrt{\frac{a_\ell}{Q_i(\ell)}}\right) \;\; \forall i \neq i_\star, 1 \le \ell \le L \right\}.\]
We note that since $Q_i(\ell) \le \ell$ and $a_\ell$ is increasing in $\ell$, we have that
\[\mathcal{E} \supseteq \bigcap_{q=1}^L \mathcal{E}_\ell, \text{ where } \mathcal{E}_q = \left\{\left(\bar{s}_{i_\star}[q] \ge \frac{14}{15} - \sqrt{\frac{a_q}{q}}\right) \wedge \left(\bar{s}_i[q] \le \frac{3}{5} + \sqrt{\frac{a_q}{q}}\right) \;\; \forall i \neq i_\star \right\}\]
From our calculations above, the probability of each event occurring is at least
\begin{align*}
    \Pr(\mathcal{E}_q)
    &\ge 1 - N \exp(-2 a_q) = 1 - N \exp(-\log(\pi^2q^2 N / 6\delta)) = 1 - \frac{6\delta}{\pi^2 q^2}.
\end{align*}
Then, by union bound, we have that
\begin{align*}
    \Pr(\mathcal{E})
    &\ge 1 - \sum_{q=1}^L \frac{6\delta}{\pi^2 q^2} \ge 1 - \frac{6\delta}{\pi^2} \sum_{q=1}^\infty q^{-2} = 1 - \delta.
\end{align*}

Under this event $\mathcal{E}$, we notice that 
\[\bar{s}_{i_\star}[Q_{i_\star}(\ell)] + \sqrt{\frac{a_\ell}{Q_{i_\star}(\ell)}} \ge \frac{14}{15} \;\; \forall \ell,\]
and for any $i \neq i_\star$, if $Q_i(\ell) > 36 a_\ell$, then
\[\bar{s}_{i}[Q_i(\ell)] + \sqrt{\frac{a_\ell}{Q_i(\ell)}} < \frac{3}{5} + \frac{1}{3} = \frac{14}{15} \;\; \forall \ell: Q_i(\ell) > 9 a_\ell.\]
Due to the selection rule of Algorithm~\ref{alg:meta}, we conclude that any of the non-matching controller can only been selected at most $36 a_L = 18 \log(\frac{\pi^2 N L^2}{6\delta})$ times.
If $L \ge 2 N \cdot 36 a_L$, then under event $\mathcal{E}$, the matching controller is the most frequently selected one and the algorithm succeeds in identification (i.e. $\hat{i} = i_\star$).

Finally, we find a value of $L$ satisfying $L \ge 72N a_L = 36 N \log(\frac{\pi^2 N L^2}{6\delta})$.
Then according to Lemma~\ref{thm:nlogn-bound}, we have that
\begin{align*}
    & L \ge 144 N \log\left(\frac{24\sqrt{6}}{\sqrt{\delta}} \pi N^{3/2}\right) \\
    \Leftrightarrow{}& \sqrt{\frac{N}{6\delta}} \cdot \pi L \ge 144 \frac{N^{3/2}}{\sqrt{6\delta}} \pi \log\left(144 \frac{N^{3/2}}{\sqrt{6\delta}} \pi\right) \\
    \Rightarrow{}& \sqrt{\frac{N}{6\delta}} \cdot \pi L \ge 72 \frac{N^{3/2}}{\sqrt{6\delta}} \pi \log\left(\sqrt{\frac{N}{6\delta}} \cdot \pi L\right) \\
    \Leftrightarrow{}& L \ge 72 N \log\left(\sqrt{\frac{N}{6\delta}} \cdot \pi L\right)\\
    \Leftrightarrow{}& L \ge 36N \log\left(\frac{\pi^2 N L^2}{6\delta}\right).
\end{align*}

Therefore, with $a_\ell = \frac{1}{2} \log(\frac{\pi^2 N \ell^2}{6\delta})$ and $L = 144 N \log\left(\frac{24\sqrt{6}}{\sqrt{\delta}} \pi N^{3/2}\right) \in O(N \log (N/\delta))$, Algorithm~\ref{alg:meta} identifies the matching controller with probability $1-\delta$.

\subsection*{Part 2: System stability}
For this part, we consider the two stages of the algorithm.
In the first $L$ episodes of the algorithm, we are testing different controllers that are potentially destabilizing.
To derive a (conservative) bound for this stage, let us define
\[R_1 := \max_{i, j} \opnorm{A_{[i,j]}}, R_2 := \max_{i, j} \opnorm{B_{[i,j]}}.\]
And for convenience, denote $T' = \tau L$ and $A_t$ be the closed-loop dynamics at time $t$.
Specifically, we have $A_t = A_{[i_\star, i_{\lceil t / \tau \rceil}]}$.

Then, we directly expand the closed-loop dynamics:
\begin{align*}
\begin{bmatrix} x_{T'} \\ \vdots \\x_2 \\ x_1 \end{bmatrix}
=
\mathcal{T}^{(T')}
\begin{bmatrix} w_{T'} \\ \vdots \\w_2 \\ w_1  \end{bmatrix}
+
\tbo{\mathcal{T}^{(T'-1)}}{\mathbf{0}_{1 \times (\nu-1)}}
\begin{bmatrix} B_{T'} u_{T'-1} \\ \vdots \\ B_2 u_2 \\ B_1 u_1  \end{bmatrix},
\end{align*}
where
$$\mathcal{T}^{(k)} = 
\begin{bmatrix} 
    I & A_{T'-1} & A_{T'-1}A_{T'-2} & A_{T'-1}A_{T'-2}A_{T'-3} & \dots &A_{T'-1}A_{T'-2} \dots A_{T'-k+1}\\ 
    0 & I & A_{T'-2} & A_{T'-2}A_{T'-3} & \dots & A_{T'-1}A_{T'-2} \dots A_{T'-k+2} \\ 
    0 & 0 & I & A_{T'-3} & \dots & A_{T'-1}A_{T'-2} \dots A_{T'-k+3}\\ 
    \vdots & \vdots & \vdots & \vdots & \ddots & \vdots
\end{bmatrix}$$
is a $k \times k$ block matrix.
In particular, we can write
\[T^{(k)} = \sum_{\ell=1}^k H^\ell D_\ell,\]
where $H$ is a shift matrix (with 1's on the first super-diagonal and 0's elsewhere), and $D_\ell$ is a block-diagonal matrix corresponding to the diagonals of $\mathcal{T}$.
It follows that 
\[\opnorm{T^{(k)}} \le \sum_{\ell=1}^k R_1^{\ell-1} \le \frac{R_1^{k}}{R_1 -1}.\]
Therefore,
\[\sum_{t=1}^{T'} \norm{x_t}^2 \le 2 \frac{R_1^{2T'}}{(R_1-1)^2} \sum_{t=1}^{T'} \norm{w_t}^2 + 2 \frac{R_1^{2T'-2}R_2}{(R_1-1)^2} \sum_{t=1}^{T'} \norm{u_t}^2. \]

Now, when the UCB portion of the algorithm succeeds in identifying the matching controller, the closed-loop system follows a stable trajectory after time $T'$.
Specifically, for constants $\kappa_0, \kappa_1$ associated with the matched closed-loop dynamics $A_{[i_\star, i_\star]}$, we have
\[\sum_{t=T'+1}^{T} \norm{x_t}^2 \le \kappa_0 \norm{x_{T'}}^2 + \kappa_1 \sum_{t=T'+1}^{T} (\norm{w_t}^2 + \norm{u_t}^2).\]

In conclusion, we can choose
\[C_0 = 2 \kappa_0 \max\left\{\frac{R_1^{2T'}}{(R_1-1)^2}, \frac{R_1^{2T'-2}R_2}{(R_1-1)^2}\right\}, C_1 = \kappa_1.\] 
\end{document}